\newtheorem{thm}{Theorem}[section] \newtheorem{pro}[thm]{Proposition}
\newtheorem{lm}[thm]{Lemma}
\newtheorem{cor}[thm]{Corollary}
\numberwithin{equation}{section}
\theoremstyle{remark}
\theoremstyle{definition} \newtheorem*{ex}{Example}
\newtheorem{rmk}[thm]{Remark} 
\newtheorem*{note}{Note}
\newtheorem{example}[thm]{Example}
\newtheorem{df}[thm]{Definition}
\DeclareMathAlphabet{\mathpzc}{OT1}{pzc}{m}{it}
\DeclareMathOperator*{\spec}{Spec} \DeclareMathOperator*{\Hom}{Hom}
\DeclareMathOperator*{\card}{card} \DeclareMathOperator*{\im}{Im}
\DeclareMathOperator*{\Aut}{Aut} 
\DeclareMathOperator*{\Gal}{Gal} 
\DeclareMathOperator*{\rank}{rank}
\DeclareMathOperator*{\Char}{char}
\DeclareMathOperator*{\into}{\hookrightarrow}
 \newcommand{\QQ}{\mathbb{Q}}
\newcommand{\ZZ}{\mathbb{Z}} \newcommand{\Aff}{\mathbb{A}}
\newcommand{\PP}{\mathbb{P}} \newcommand{\FF}{\mathbb{F}}
 \newcommand{\cF}{\mathcal{F}}
 \newcommand{\cC}{\mathcal{C}}
\newcommand{\cO}{\mathcal{O}}
\newcommand{\invlim} {\displaystyle \lim_{\longleftarrow}}
\newcommand{\onto}{\twoheadrightarrow}
\begin{document}
\title{The fundamental group of affine curves in positive characteristic} 
  \author{
  Manish Kumar
  }
  \address{
Department of Mathematics \\
Universit\"at Duisburg-Essen \\
45117 Essen, Germany
  }
  \email{manish.kumar@uni-due.de}
\begin{abstract}
  It is shown that the commutator subgroup of the fundamental
  group of a smooth irreducible affine curve over an uncountable algebraically 
  closed field $k$ of positive characteristic is a profinite free group of rank 
  equal to the cardinality of $k$.
\end{abstract}
\date{} \maketitle

\section{Introduction}
The algebraic (\'etale) fundamental group of an affine curve over an algebraically closed
field $k$ of positive characteristic has a complicated structure. It is an 
infinitely generated profinite group, in fact the rank of this group is same as the 
cardinality of $k$.
The situation when $k$ is of characteristic zero is simpler to understand.
The fundamental group of a smooth curve over an algebraically closed 
field of characteristic zero
is just the profinite completion of the topological fundamental group (\cite[XIII, 
Corollary 2.12, page 392]{SGA1}).
In positive characteristic as well, Grothendieck gave a description of the prime-to-$p$ 
quotient of the fundamental group of a smooth curve which
is in fact analogous to the characteristic zero case. 
From now on, we shall assume that the characteristic of the base field $k$
is $p>0$. Consider the following exact sequence for the
fundamental group of a smooth affine curve $C$.
\begin{equation*}
1\rightarrow \pi_1 ^c (C) \rightarrow \pi_1(C) \rightarrow \pi_1
^{ab}(C) \rightarrow 1
\end{equation*}
where $\pi_1 ^c(C)$ and $\pi_1 ^{ab}(C)$ are
the commutator subgroup and the abelianization of the fundamental
group $\pi_1(C)$ of $C$ respectively. In \cite{kum}, a description of 
$\pi_1^{ab}(C)$ was given \cite[Corollary 3.5]{kum} and it was also shown that 
$\pi_1^c(C)$ is a free profinite group of countable rank 
if $k$ is countable \cite[Theorem 1.2]{kum}. In fact some more exact sequences with
free profinite kernel like the above were also observed \cite[Theorem 7.1]{kum}. Later
using somewhat similar ideas and some profinite group theory Pacheco, Stevenson and
Zalesskii claimed to find a condition for a closed normal subgroup of $\pi_1(C)$ to be 
profinite free of countable rank \cite{PSZ} but unfortunately 
there seems to be a gap in their argument as Example \ref{nonfree-ex} suggests. 

A consequence of the main result of this paper generalizes \cite[Theorem 1.2]{kum} to uncountable
fields.
\begin{thm} \label{main-thm}
Let $C$ be a smooth affine curve over an algebraically closed field $k$ 
(possibly uncountable) of characteristic $p$ then $\pi_1^c(C)$
is a free profinite group of rank $\card(k)$.
\end{thm}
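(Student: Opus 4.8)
The plan is to verify the standard characterization of free profinite groups of infinite rank (Mel'nikov): for an infinite cardinal $m$, a profinite group $G$ is free of rank $m$ provided $G$ is projective, the weight of $G$ is at most $m$, and every nontrivial finite embedding problem for $G$ (that is, every diagram $\phi\colon G\onto B$, $\alpha\colon A\onto B$ with $A,B$ finite and $1\neq\ker\alpha$) admits at least $m$ distinct proper solutions. The upper bound is then automatic, since there are at most $\card(k)$ continuous homomorphisms from such a $G$ into any fixed finite group once the weight is $\le\card(k)$. Applying this with $m=\card(k)$ reduces Theorem~\ref{main-thm} to three assertions about $\pi_1^c(C)$: projectivity, weight exactly $\card(k)$, and the existence of $\card(k)$ proper solutions for each nontrivial finite embedding problem. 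The first two are comparatively soft; the third carries the weight of the theorem.

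For projectivity I would use that a smooth affine curve over an algebraically closed field of characteristic $p$ satisfies $\cd(\pi_1(C))\le 1$ --- the prime-to-$p$ part coming from the tame fundamental group and the $p$-part from the vanishing of higher Artin--Schreier cohomology on an affine scheme --- so $\pi_1(C)$ is projective; since a closed subgroup of a projective profinite group is again projective, $\pi_1^c(C)$ is projective. For the weight, the upper bound $w(\pi_1^c(C))\le w(\pi_1(C))=\card(k)$ holds because the open subgroups of a closed subgroup are intersections with open subgroups of the ambient group, so their cardinality cannot increase; the matching lower bound will fall out of the solution count below, a free group of rank $m$ having weight $m$.

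The main work is the embedding-problem count, and here the decisive point is that the uncountability of $k$ is encoded in the coordinate ring: when $k$ is uncountable one has $\card(\cO(C))=\card(k)$, so Artin--Schreier--Witt theory furnishes $\card(k)$ pairwise distinct wild cyclic covers of $C$, parametrised by classes in $\cO(C)$ modulo the map $\wp=F-1$. My approach is to start from a nontrivial finite embedding problem $\bigl(\phi\colon\pi_1^c(C)\onto B,\ \alpha\colon A\onto B\bigr)$, convert it --- via the $\pi_1^{ab}(C)$-action and the usual induction mechanism for the normal subgroup $\pi_1^c(C)\trianglelefteq\pi_1(C)$ --- into an embedding problem over a finite \'etale cover of $C$, solve that one by the geometric patching techniques already employed in the countable case of \cite{kum}, and then twist the resulting solution by the $\card(k)$ independent covers above, varying the Artin--Schreier parameter (or the branch data) over an uncountable family in $k$.

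The hard part will be this last step: arranging simultaneously that the twisted maps are genuine proper solutions of the $\pi_1^c(C)$-embedding problem (surjecting onto $A$ and compatible over $B$) and that they are pairwise distinct as homomorphisms out of the \emph{non-open} normal subgroup $\pi_1^c(C)$. Distinctness demands a non-degeneracy argument --- essentially a linear independence statement for the chosen family of classes in $\cO(C)/\wp\,\cO(C)$ --- while properness demands that each twist genuinely enlarges the image from $B$ to $A$ rather than collapsing it; controlling both at once, through the correspondence between finite quotients of $\pi_1^c(C)$ and finite covers of the abelian pro-cover $C^{ab}\to C$, is the technical crux of the argument.
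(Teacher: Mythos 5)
Your reduction step is sound and in fact matches the paper's skeleton: the paper also combines projectivity of $\pi_1^c(C)$ with a Mel'nikov-type criterion (Theorem \ref{profinite}, built on Lemma \ref{FJ-lemma} and [RZ, Theorem 8.5.2]) and thereby reduces Theorem \ref{main-thm} to producing $\card(k)$ proper solutions of suitable finite embedding problems; projectivity and the rank bound are handled exactly as you say. The gap is in your mechanism for producing the solutions. Any two solutions of a fixed embedding problem $(\phi\colon \Pi \onto G,\ \alpha\colon \Gamma\onto G)$ differ by a map into the kernel $H=\ker(\alpha)$, so a twist of a given solution must take values in $H$; when $H$ is prime to $p$ (say $H=\ZZ/\ell\ZZ$, $\ell\neq p$), twisting by Artin--Schreier classes in $\cO(C)/\wp\,\cO(C)$ produces nothing, since those classes only see $\ZZ/p\ZZ$-quotients. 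Your ``linear independence in $\cO(C)/\wp\,\cO(C)$'' proves only $R_{\ZZ/p\ZZ}(\pi_1^c(C))=\card(k)$, which is the easy abelian case (Proposition \ref{rank}). The paper instead splits according to the type of the minimal normal kernel. For prime-to-$p$ kernels (Theorem \ref{main}) it builds $\Gamma$-covers by formal patching over $k[[t]]$ and obtains $\card(k)$ distinct solutions by a device absent from your plan: a totally ordered family $\cF$ of $\card(k)$ algebraically closed subfields $k_\alpha\subset k$, each of infinite transcendence degree over the union of its predecessors, with the patching parameter $t$ specialized at a point transcendental over that union; the resulting cover is then \emph{not defined over} any smaller field of the chain (this is where the Artin--Schreier non-equivalence $Z^p-Z-ax^r\sim Z^p-Z-bx^r \iff a/b\in\FF_p$ actually enters, inside Proposition \ref{many-covers}), and that is what makes the solutions pairwise distinct. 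For quasi-$p$ kernels (Theorem \ref{quasi-p}) no new patching is done at all: the paper quotes Pop's theorem for $\pi_1(X^0)$ and restricts to $\Pi$.

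Moreover, the point you correctly flag as the crux --- properness and pairwise distinctness after restriction to the non-open normal subgroup $\pi_1^c(C)$ --- is not a matter of independence of Artin--Schreier classes; it is what forces the paper's Section 2 reduction. One first arranges (Theorem \ref{profinite}, by induction on $|H|$ using projectivity) that it suffices to treat \emph{split} embedding problems whose kernel $H$ is a minimal normal subgroup of $\Gamma$ contained in the Frattini subgroup $M(\Gamma)$, and both hypotheses are then used essentially: Lemma \ref{restriction-lemma} shows that distinct solutions for the ambient group restrict to distinct \emph{proper} solutions on the normal subgroup, because $H\subset M(\Gamma)$ forces any normal subgroup of $\Gamma$ surjecting onto $G$ to equal $\Gamma$, while minimality of $H$ drives the $\Gamma\times_G\Gamma$ fiber-product argument. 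In the prime-to-$p$ case the corresponding transfer to $\Pi$ is done by linear disjointness over $K^{b}$, which relies on the finiteness of \'etale $H$-covers of a fixed curve for prime-to-$p$ groups $H$ --- again special to that case. Without the prime-to-$p$/quasi-$p$ dichotomy, the Frattini/minimal-normal reduction, and the chain-of-subfields construction, your plan cannot be completed as stated.
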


Let $P_g(C)$ be the intersection of all index $p$ normal subgroups of 
$\pi_1(C)$ corresponding to \'etale covers of $C$ of genus at least $g$ 
(see Definition \ref{defn:P_g}).
In the main theorem (Theorem \ref{groupisfree}) it is shown that if $\Pi$ 
is a closed normal subgroup of $\pi_1(C)$ of rank $\card(k)$ such that 
$\pi_1(C)/\Pi$ is abelian, $\Pi\subset P_g(C)$ for some $g\ge 0$ and 
for every finite simple group $S$ there exist a surjection from $\Pi$ 
to $\card(k)$ copies of $S$ then $\Pi$ is profinite free.

As a consequence we get the following result.
\begin{cor} \label{othernormalsubgroup}
 Let $\Pi$ be a closed normal subgroup of $P_g(C)$ for some $g\ge 0$. If rank 
 of $P_g(C)/\Pi$ is strictly less than $\card(k)$ then $\Pi$ is profinite free 
 of rank $\card(k)$.
\end{cor}

\begin{proof}
 Note that $P_g(C)$ is a profinite free group of rank $\card(k)$ by Corollary 
 \ref{cor:freenessof-comm-P_g}. 
 So the corollary follows from Melnikov's result on freeness of a normal 
 subgroup of a profinite free group (\cite[Theorem 8.9.4]{RZ}).
\end{proof}

The existence of wildly ramified covers is the primary reason for the algebraic 
fundamental group of an affine curve to be incomprehensible. More precisely, 
there are many $p$-cyclic Artin-Schreier \'etale covers of the affine line. 
In fact there is a positive dimensional configuration space of such covers of 
any fixed genus (see \cite{pries}).
This also suggests that the fundamental group of an affine curve in positive 
characteristic contains much more information
about the curve than in characteristic zero case. In fact Harbater 
and Tamagawa have conjectured that the fundamental group of a
smooth affine curve over an algebraically closed field of characteristic $p$ should
determine the curve completely (as a scheme) and in particular one should be able to 
recover the base field. Harbater and Tamagawa have shown some positive results supporting the 
conjecture. See \cite[Section 3.4]{kum-thesis}, \cite{ha8}, \cite{ta1} and \cite{ta2} for 
more details. 

The above theorem on the commutator subgroup can also be interpreted as an 
analogue of the Shafarevich's conjecture for global
fields. Recall that the Shafarevich conjecture says that the
commutator subgroup of the absolute Galois group of the rational numbers
$\QQ$ is a profinite free group of countable rank. David Harbater \cite{Ha6}, 
Florian Pop \cite{pop} and later 
Dan Haran and Moshe Jarden \cite{HJ} have shown, using
different patching methods, that the absolute Galois group of the function
field of a curve over an algebraically closed field is profinite free of the
rank same as the cardinality of the base field. 
See \cite{Ha7} for more details on these kind of results and questions.

Though the whole fundamental group is not well understood,
a necessary and sufficient condition for a finite group to 
be a quotient of the fundamental group of a smooth affine 
curve was conjectured by Abhyankar. It was proved by
Raynaud for the affine line \cite{Ra1} and by Harbater in general 
\cite{Ha2}. 
\begin{thm}(Harbater, Raynaud)\label{abhconj}
Let $C$ be a smooth affine curve of genus $g$ over an algebraically 
closed field of characteristic $p$. Let $D$ be the smooth compactification of 
$C$ and $\card(D\setminus C)=n+1$. Then a finite group $G$ is a quotient of 
$\pi_1(C)$ if and only if $G/p(G)$ is generated by $2g+n$ elements, where $p(G)$
is the quasi-$p$ subgroup of $G$.
\end{thm}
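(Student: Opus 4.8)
The plan is to prove the two implications separately, since they rest on entirely different inputs. For necessity I would invoke Grothendieck's determination of the prime-to-$p$ quotient of $\pi_1(C)$. Since $C$ has genus $g$ and $D\setminus C$ consists of $n+1$ points, the maximal prime-to-$p$ quotient $\pi_1(C)^{(p')}$ is the prime-to-$p$ completion of the free group of rank $2g+n$; this is the profinite analogue of the complex topological fundamental group $\langle a_1,b_1,\dots,a_g,b_g,c_1,\dots,c_{n+1}\mid \prod_i[a_i,b_i]\prod_j c_j=1\rangle$, which after eliminating the single relation is free on $2g+n$ generators. If $G$ is a quotient of $\pi_1(C)$, then $G/p(G)$, being the largest prime-to-$p$ quotient of $G$, factors through $\pi_1(C)^{(p')}$ and is therefore generated by $2g+n$ elements. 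This direction is routine once Grothendieck's theorem is granted.

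The sufficiency is the substance of the theorem, and I would organize it around Raynaud's base case of the affine line followed by Harbater's patching reduction. The base case is $\Aff^1$, where $g=0$ and $n=0$, so the hypothesis becomes $G/p(G)=1$, i.e.\ $G$ is quasi-$p$; one must then realize every quasi-$p$ group $G$ as a Galois group over $\Aff^1$, equivalently as a $G$-cover of $\PP^1$ branched only over $\infty$. Raynaud's method is an induction on $\card(G)$ using the theory of stable (semistable) reduction of Galois covers: one constructs an auxiliary $G$-cover over a complete discrete valuation ring, typically with larger branch locus or in mixed characteristic, and then studies its stable model, arranging the vanishing cycles and the ramification filtration so that in the special fibre all branching coalesces at the single point $\infty$. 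Controlling the branch locus through this degeneration, and verifying connectedness of the resulting cover, is the delicate heart of the argument.

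Granting the affine line, the general curve would follow by formal (equivalently rigid) patching in the style of Harbater. The idea is to build the $G$-cover of $C$ by gluing local pieces along formal annuli: over a small formal neighbourhood of one chosen point of $D$ one inserts a wildly ramified quasi-$p$ cover furnished by the affine line case, while over the complement one realizes the remaining monodromy by a tame cover supplied by Grothendieck's theorem, and then one patches using Harbater's existence results for $G$-covers over complete local rings (the ``Half Riemann Existence Theorem''). What makes this possible is a group-theoretic decomposition: the hypothesis that $G/p(G)$ is generated by $2g+n$ elements is upgraded to a presentation of $G$ by $2g+n$ generators together with auxiliary elements of $p(G)$ that can serve as local monodromy data at the patching locus.

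I expect the main obstacle to be Raynaud's affine line case, where the analysis of stable reduction and of vanishing cycles must be pushed to guarantee that every quasi-$p$ group occurs with branching concentrated at $\infty$; this is genuinely hard and has no elementary substitute. By comparison the patching step, though technically elaborate, is formal once the correct local building blocks and the group-theoretic generation lemma are in place, and the necessity direction is immediate from the prime-to-$p$ theory.
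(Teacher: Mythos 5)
There is nothing in the paper to compare your proposal against: this theorem is quoted as background, with the proof attributed by citation to Raynaud \cite{Ra1} for the affine line and Harbater \cite{Ha2} for the general case; the paper itself contains no argument. Judged on its own terms, your outline is a faithful roadmap of exactly those references. The necessity direction is correct and essentially complete as you state it: $G/p(G)$ is indeed the maximal prime-to-$p$ quotient of $G$ (every homomorphism to a prime-to-$p$ group kills all $p$-Sylow subgroups, hence kills $p(G)$), and Grothendieck's description of $\pi_1(C)^{(p')}$ as the prime-to-$p$ completion of a free group of rank $2g+n$ then forces the generation bound. For sufficiency you correctly identify the two pillars — Raynaud's realization of quasi-$p$ groups over $\Aff^1$ via semistable reduction of covers, and Harbater's formal patching reduction of the general case to the affine line, using the group-theoretic fact that $G$ is generated by $p(G)$ together with $2g+n$ further elements. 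But be clear about the status of what you have written: those two pillars \emph{are} the theorem, and your proposal invokes them rather than proves them, so as a proof it is a plan whose entire mathematical content is deferred. That is the same thing the paper does by citation, and it is a legitimate way to use the result, but it should not be mistaken for an independent argument; carrying out Raynaud's stable-reduction analysis or Harbater's patching construction would each be a substantial undertaking far beyond the scope of this outline.
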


Section \ref{profinite-group} consists of definitions and results on profinite groups.
This section also reduces Theorem \ref{main-thm} to solving certain embedding 
problems. The last section consists of solutions to these embedding problems. Thanks are due
to David Harbater for some useful discussions regarding the uncountable case. 
I am also grateful to a referee for pointing out errors and gaps in the previous draft.
Lior Bary-Soroker encouraged me to include Corollary \ref{othernormalsubgroup} for which I am thankful to him. I would also like to thank Hilaf Hasson for pointing out some typographical errors.

\section{Profinite group theory}\label{profinite-group}

Notation and contents of this section are inspired from \cite{RZ} and \cite{FJ}.
For a finite group $G$ and a prime number $p$,
let $p(G)$ denote the subgroup of $G$ generated by all the $p$-Sylow
subgroups of $G$. The subgroup $p(G)$ is called the \emph{quasi-$p$} subgroup of $G$. If $G=p(G)$ then $G$ is called a quasi-$p$ group.

A family of finite groups $\cC$ is said to be \emph{almost full} if it satisfies
the following conditions:
\begin{enumerate}
\item A nontrivial group is in $\cC$.
\item If $G$ is in $\cC$ then every subgroup of $G$ is in $\cC$.
\item If $G$ is in $\cC$ then every homomorphic image of $G$ is in $\cC$.
\item If $G_1, G_2, \ldots,G_n$ are in $\cC$ then the product $G_1\times G_2\times\ldots\times G_n$ is in $\cC$.
\end{enumerate}
Moreover $\cC$ is called a \emph{full} family if it is closed under extensions,
i.e., if $G_1$ and $G_3$ are in $\cC$ and there is a short exact sequence 
$$1\to G_1\to G_2 \to G_3\to 1$$
then $G_2$ is in $\cC$.

\begin{ex}
The family of all finite groups is full. For a prime number $p$, the family of
all $p$-groups is full. 
\end{ex}

Let $\cC$ be an almost full family of finite groups. A \emph{pro-$\cC$} group is a 
profinite group whose finite quotients lie in $\cC$. Equivalently, it is an inverse
limit of an inverse system of groups contained in $\cC$.  If $\cC$ is the family
of all $p$-groups then pro-$\cC$ groups are also called pro-$p$ groups. 

Let $m$ be an infinite cardinal or a positive integer.
A subset $I$ of a profinite group $\Pi$ is called a \emph{generating set} if
the smallest closed subgroup of $\Pi$ containing $I$ is $\Pi$ itself. A generating
set $I$ is said to be converging to 1 if every open normal subgroup of $\Pi$
contains all but finitely many elements of $I$. The \emph{rank} of $\Pi$ is the 
infimum of the cardinalities of all the generating sets of $\Pi$ converging to 1.

A profinite group $\Pi$ is called a \emph{free} pro-$\cC$ group of rank $m$ if
$\Pi$ is isomorphic to the inverse limit of the inverse system obtained by taking quotients 
of $F_I$ by open normal subgroups $K$ which contain all but finitely many 
elements of $I$ and $F_I/K \in \cC$. The image of $I$ under the natural map 
$F_I \to \hat{F}_I=\Pi$ is a generating set converging to 1.
When $\cC$ is the family of all finite groups then
free pro-$\cC$ groups are same as free profinite groups.

For a group $\Pi$ and a finite group $S$, let $R_S(\Pi)$ denote the cardinality 
of the maximal cardinal $m$ such that there exist a surjection from $\Pi$ to 
the product of $m$ copies of $S$. The intersection of all the proper normal maximal subgroups 
of $\Pi$ is denoted by $M(\Pi)$.

An \emph{embedding problem} consists of surjections $\phi\colon \Pi \onto G$
and $\alpha\colon \Gamma \onto G$        
\begin{equation*}
\xymatrix{
  &          &                        &\Pi \ar @{-->}[dl]_{\psi} \ar[d]^{\phi}\\
  1\ar[r] & H \ar[r] & \Gamma \ar[r]_{\alpha} & G \ar[r] \ar[d] & 1\\
  & & & 1 }
\end{equation*}
where $G$, $\Gamma$ and $\Pi$ are groups and $H=\ker(\alpha)$.
It is also sometimes called an embedding problem for $\Pi$.
It is said to have a \emph{weak solution} if there exists a group
homomorphism $\psi$ which makes the diagram commutative, i.e., $\alpha
\circ \psi =\phi$. Moreover, if $\psi$ is an epimorphism then it is
said to have a \emph{proper solution} (or a \emph{solution}). It is said to be a finite
embedding problem if $\Gamma$ is finite. An embedding problem is said to be a
\emph{split} if there exists a group homomorphism from $G$
to $\Gamma$ which is a right inverse of $\alpha$. Two proper solutions $\psi_1$ and
$\psi_2$ are said to be distinct if $\ker(\psi_1)\ne \ker(\psi_2)$.

Let $\cC$ be an almost full family. A profinite group $\Pi$ of rank $m$ is called 
\emph{$\cC$-homogeneous} if every embedding problem of the following type has a solution:
  $$
  \xymatrix{
    &          &               &\Pi \ar[d] \ar @{-->} [dl]\\
    1\ar[r] & \tilde{H} \ar[r] & \tilde{\Gamma} \ar[r] & \tilde{G} \ar[r]\ar[d] & 1\\
    & & & 1 }
  $$
Here $\tilde{\Gamma}$ and $\tilde{G}$ can be any pro-$\cC$ groups with 
$\rank(\tilde{\Gamma})\le m$ and $\rank(\tilde{G})<m$. Moreover, 
$\tilde{H} \in \cC$ is a minimal normal subgroup of $\tilde{\Gamma}$ 
and is contained in $M(\tilde{\Gamma})$.
If $\cC$ is the class of all finite groups then $\Pi$ is called \emph{homogeneous}.

\begin{note}
In view of \cite[Lemma 3.5.4]{RZ}, the above definition of homogeneous is equivalent to the
definitions given in \cite{RZ} and \cite{FJ}.
\end{note}
Let $m$ denote an infinite cardinal and $\cC$ be an almost full family.
The following is an easy generalization of \cite[Lemma 25.1.5]{FJ}. 
The proof is exactly the same but is reproduced here for the sake of completion.

\begin{lm} \label{FJ-lemma}
Let $\Pi$ be a profinite group such that every nontrivial finite embedding problem 
$\phi:\Pi \onto G, \alpha:\Gamma \onto G$
with $\Gamma$ in $\cC$ and $H=\ker(\alpha)$ a minimal normal subgroup of $\Gamma$ has 
$m$ solutions. Then the following embedding problem has a solution.
\begin{equation}\label{EP-FJ}
  \xymatrix{
    &          &               &\Pi \ar[d] \ar @{-->} [dl]\\
    1\ar[r] & \tilde{H} \ar[r] & \tilde{\Gamma} \ar[r] & \tilde{G} \ar[r]\ar[d] & 1\\
    & & & 1 }
\end{equation}
Here $\tilde{\Gamma}$ and $\tilde{G}$ are pro-$\cC$ groups with 
$\rank(\tilde \Gamma)\le m$ and $\rank(\tilde G)<m$. And $\tilde{H} \in \cC$ is a 
minimal normal subgroup of $\tilde{\Gamma}$.

Moreover, if the existence of solutions to only those embedding problems with $\tilde{H} \subset 
M(\tilde{\Gamma})$ is desired then the hypothesis can be weakened to the existence 
of $m$ solutions to finite embedding problems in which $H=\ker(\alpha)$
is contained in $M(\Gamma)$.
\end{lm}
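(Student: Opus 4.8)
The plan is to reduce the infinite embedding problem \eqref{EP-FJ} to a coherent family of finite embedding problems of the type covered by the hypothesis, and then to assemble the pieces into a single surjection by a transfinite construction. First I would record that $\tilde H$, lying in $\cC$, is finite, and that for every open normal subgroup $U\triangleleft\tilde\Gamma$ with $U\cap\tilde H=1$ the quotient $\tilde\Gamma/U$ is in $\cC$, the image of $\tilde H$ is a minimal normal subgroup of $\tilde\Gamma/U$ isomorphic to $\tilde H$, and
\begin{equation*}
\tilde\Gamma\;\cong\;(\tilde\Gamma/U)\times_{\,\tilde\Gamma/\tilde H U\,}\tilde G .
\end{equation*}
Such $U$ are cofinal among the open normal subgroups of $\tilde\Gamma$ (because $\tilde H$ is finite), so $\tilde\Gamma=\invlim_U\tilde\Gamma/U$ over this directed system. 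Consequently a solution $\psi\colon\Pi\to\tilde\Gamma$ of \eqref{EP-FJ} is the same as a coherent family of solutions $\psi_U\colon\Pi\to\tilde\Gamma/U$ of the finite embedding problems $\phi_U\colon\Pi\onto\tilde\Gamma/\tilde H U$, $\alpha_U\colon\tilde\Gamma/U\onto\tilde\Gamma/\tilde H U$, each of which has minimal normal kernel and hence $m$ solutions by hypothesis. A mere weak solution is cheap: one finite solution $\psi_{U_0}$ together with $\phi$ already produces the weak solution $(\psi_{U_0},\phi)$ through the displayed fiber product. So the real content is to produce a \emph{proper} (surjective) coherent family.

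Next I would build this family by transfinite recursion along the directed poset of the above $U$, well-ordered so as to refine reverse inclusion, with $\rank(\tilde\Gamma)\le m$ bounding the number of stages by $m$. At a successor/amalgamation stage one is handed the solutions already chosen at the finitely many coarser quotients comparable to the current node and must pick $\psi_U$ lifting all of them; factoring the projection $\tilde\Gamma/U\onto\tilde\Gamma/U'$ through a chief series keeps each elementary step a finite embedding problem with \emph{minimal normal} kernel, so the hypothesis applies. The point of carrying $m$ solutions rather than one enters here: the bounds $\rank(\tilde\Gamma)\le m$ and $\rank(\tilde G)<m$ guarantee that fewer than $m$ conditions are active at any stage, while $m$ solutions leave room to choose one compatible with all of them and with the prescribed $\phi$-image on the $\tilde G$-coordinate. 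Limit stages are taken by inverse limit. Since each $\psi_U$ is onto the finite group $\tilde\Gamma/U$ and the family is coherent, the assembled map $\psi=\invlim_U\psi_U\colon\Pi\to\tilde\Gamma$ has dense, hence (by compactness) full, image and satisfies $\alpha\circ\psi=\phi$; this is the required proper solution.

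The main obstacle is precisely this coherent amalgamation over a poset that is directed but in general has no cofinal chain when $m$ is uncountable: at a node one may be forced to be compatible with several incomparable branches at once, and it is the surplus of solutions ($m$ against fewer than $m$ constraints) that makes a simultaneous choice possible. For the \emph{moreover} statement one observes that when $\tilde H\subseteq M(\tilde\Gamma)$ every weak solution is automatically proper: the image of such a $\psi$ is closed and satisfies $\image(\psi)\cdot\tilde H=\tilde\Gamma$, whence $\image(\psi)\cdot M(\tilde\Gamma)=\tilde\Gamma$ forces $\image(\psi)=\tilde\Gamma$ by the defining property of the Frattini subgroup. Thus in this case the cheap weak solution already suffices, and since the finite embedding problems arising in the reduction inherit the condition that their kernels lie in the Frattini subgroup, one only needs those finite problems with $H\subseteq M(\Gamma)$ to be solvable, which is the weakened hypothesis.
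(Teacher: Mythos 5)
Your reduction to finite quotients is the correct first step and coincides with the paper's: since $\tilde H$ is finite one picks an open normal subgroup $U\triangleleft\tilde\Gamma$ with $U\cap\tilde H=1$, notes that $\tilde HU/U$ is a minimal normal subgroup of $\tilde\Gamma/U$ isomorphic to $\tilde H$, and that $\tilde\Gamma\cong(\tilde\Gamma/U)\times_{\tilde\Gamma/\tilde HU}\tilde G$. Your treatment of the \emph{moreover} case is also correct and essentially complete: one finite solution together with $\phi$ gives a weak solution through this fiber product, and $\image(\psi)\cdot\tilde H=\tilde\Gamma$ with $\tilde H\subseteq M(\tilde\Gamma)$ forces $\image(\psi)=\tilde\Gamma$ by the Frattini property. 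But in the main case (kernel not contained in the Frattini subgroup) your argument has a genuine gap. The transfinite construction of a coherent family $(\psi_U)$ does not work as described, because coherence is not a condition that can be met stage by stage by counting ``$m$ solutions against fewer than $m$ constraints.'' If $U_1,U_2$ are incomparable nodes treated at earlier stages, any common refinement $U\subseteq U_1\cap U_2$ treated later forces $\psi_{U_1}$ and $\psi_{U_2}$ to agree on the join quotient $\tilde\Gamma/U_1U_2$; this agreement would be automatic only if the join had itself been a node treated earlier, but your poset is not closed under joins: $U_1\cap\tilde H=U_2\cap\tilde H=1$ does not imply $U_1U_2\cap\tilde H=1$ (take $\tilde\Gamma=(\ZZ/p\ZZ)^3$ with basis $e_1,e_2,e_3$, $\tilde H=\langle e_1\rangle$, $U_1=\langle e_2\rangle$, $U_2=\langle e_1+e_2\rangle$). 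Thus previously chosen surjections may admit no common lift at all, and no surplus of solutions at the current stage can repair a joint incompatibility created earlier. A symptom of the missing mechanism is that the hypothesis $\rank(\tilde G)<m$ plays no genuine role in your argument; it is not there to bound ``active constraints.''

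The missing idea --- which is the paper's actual finishing move, following Fried--Jarden, Lemma~24.17 --- is that a \emph{single} well-chosen finite solution already suffices, precisely because of the fiber product identity you wrote down. Among the $m$ solutions $\beta\colon\Pi\onto\tilde\Gamma/U$ of the one finite embedding problem, at most $\rank(\tilde G)<m$ can have $\ker\phi\subseteq\ker\beta$, i.e.\ factor through $\tilde G$ (by [FJ, Lemma~24.13]); hence one may choose $\beta$ with $\ker\phi\not\subseteq\ker\beta$. The pair $(\beta,\phi)$ then induces $\gamma\colon\Pi\to\tilde\Gamma$, and [FJ, Lemma~23.10] together with minimality of the kernel $\tilde HU/U$ shows that either $\image(\gamma)=\tilde\Gamma$ or $\beta$ factors through $\tilde G$; the latter is excluded by the choice of $\beta$. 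So no transfinite induction and no coherent family over all $U$ is needed: the hypothesis of $m$ solutions (rather than one) exists exactly to dodge the fewer than $m$ degenerate solutions, not to feed an amalgamation argument.
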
 

\begin{proof}
Consider the embedding problem \eqref{EP-FJ}.
Since $\tilde{H}$ is finite, there exist an open normal subgroup $N$ of 
$\tilde{\Gamma}$ such that $N \cap \tilde{H}= \{1\}$. Taking quotient by $N$, we 
get a finite embedding problem
  $$
  \xymatrix{
    &          &               &\Pi \ar[d] \ar @{-->} [dl]\\
    1\ar[r] & H \ar[r] & \Gamma \ar[r]^{\alpha_0} & G \ar[r]\ar[d] & 1\\
    & & & 1 }
  $$
where $\Gamma=\tilde{\Gamma}/N$, $G=\tilde{G}/\alpha{N}$, the subgroup $H$ of
$\Gamma$ is isomorphic to $\tilde{H}$ and $\Gamma$ is in $\cC$. Moreover if 
we assume that $\tilde{H} \subset M(\tilde{\Gamma})$ then $H \subset M(\Gamma)$.
The reason being, every maximal normal subgroup of $\Gamma$ is a quotient of a 
maximal normal subgroup of $\tilde{\Gamma}$ containing $N$.
The rest of the proof is same as that of \cite[Lemma 25.1.5]{FJ}.
We have the following scenario:
\begin{equation*}
\xymatrix{
       \Pi \ar@{->>}[drr]^{\phi} \ar@{-->>}[ddr] \\
\tilde{H}\ar@{^{(}->}[r]\ar@{=}[d]& \tilde{\Gamma}\ar@{->>}[r]^{\alpha}\ar@{->>}[d]& \tilde{G}\ar@{->>}[d]\\
H\ar@{^{(}->}[r]& \Gamma\ar@{->>}[r]^{\alpha_0}& G 
}
\end{equation*}
By assumption there exist $\beta:\Pi \onto \Gamma$ which makes the above diagram 
commutative. In fact there are $m$ choices for $\beta$.
If $\ker(\phi)\subset \ker(\beta)$ then $\beta$ factors through $\tilde{G}$.
By \cite[Lemma 25.1.1]{FJ}, there are at most $\rank(\tilde{G})<m$ surjections from
$\tilde{G}$ to $\Gamma$. Hence we can choose $\beta$ so that $\ker{\beta}$ does not
contain $\ker{\phi}$. Since $\tilde{\Gamma}$ is the fiber product of $\tilde{G}$
and $\Gamma$ over $G$, the maps $\beta$ and $\phi$ induce a map $\gamma:\Pi \to 
\tilde{\Gamma}$ so that the following diagram commutes:
\begin{equation*}
\xymatrix{
    \Pi \ar@/^/@{->>}[drr]^{\phi} \ar@/_/@{->>}[ddr]_{\beta} \ar[rd]|-{\gamma} \\
& \tilde{\Gamma}\ar@{->>}[r]^{\alpha}\ar@{->>}[d]& \tilde{G}\ar@{->>}[d]\\
& \Gamma\ar@{->>}[r]^{\alpha_0}& G 
}
\end{equation*}

By \cite[Lemma 24.4.1]{FJ} there exist a group $G'$ which fits in the following 
diagram:
\begin{equation*}
\xymatrix{
\im(\gamma)\ar@{->>}[r]^{\alpha}\ar@{->>}[d]& \tilde{G}\ar@{->>}[d]_{\zeta'}\ar@/^/@{->>}[ddr]\\
\Gamma\ar@{->>}[r]^{\alpha_0'}\ar@/_/@{->>}[rrd]_{\alpha_0}& G' \ar@{->>}[rd]|-{\theta}\\
& & G
}
\end{equation*}
the maps from $\im(\gamma)$ are the restriction of maps from $\tilde{\Gamma}$ 
and $\im(\gamma)$ is the fiber product of $\tilde{G}$ and $\Gamma$ over $G'$.
Since $H=\ker(\alpha_0)$ is a minimal normal subgroup of $\Gamma$, one of $\theta$
or $\alpha_0'$ is an isomorphism. If $\alpha_0'$ where an isomorphism then
$\beta=\alpha_0'^{-1}\circ\zeta'\circ\phi$ contradicting $\ker(\phi)$ is
not a subset of $\ker(\beta)$. Hence $\theta$ is an isomorphism. So 
again by \cite[Lemma 24.4.1]{FJ}, $\im(\gamma)=\tilde{\Gamma}$ solves the 
embedding problem \eqref{EP-FJ}.
\end{proof}

\begin{thm}\label{profinite}
Let $\Pi$ be a profinite group of rank $m$. Suppose: 
\begin{enumerate}
  \item $\Pi$ is projective.
  \item
  Every nontrivial finite embedding problem
\begin{equation}\label{EP0}
\xymatrix{
    &          &               &\Pi \ar[d] \ar @{-->} [dl]\\
    1\ar[r] & H \ar[r] & \Gamma \ar[r] & G \ar[r]\ar[d] & 1\\
    & & & 1 }
\end{equation}
  with $H$ a quasi-p group, minimal normal subgroup of $\Gamma$ and $H\subset M(\Gamma)$ has $m$ 
  solutions.
\item Every nontrivial finite split embedding problem \eqref{EP0} with $H$ prime-to-$p$ group and minimal
normal subgroup of $\Gamma$ has $m$ solutions.
\end{enumerate}
Then $\Pi$ is homogeneous.  
Moreover, if $R_S(\Pi)=m$ for every finite simple group $S$ then $\Pi$ is a profinite
free group.
\end{thm}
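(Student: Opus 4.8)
The plan is to prove the two assertions in order: first that $\Pi$ is homogeneous, and then, granting the extra hypothesis on the invariants $R_S(\Pi)$, that $\Pi$ is free of rank $m$. For the second assertion I would simply invoke the standard characterization of free profinite groups of infinite rank (Melnikov; cf. \cite{FJ}, \cite{RZ}): a homogeneous profinite group $\Pi$ with $R_S(\Pi)=m$ for every finite simple group $S$ is free profinite of rank $m$. Thus the substance of the argument lies in the first assertion, where the engine is the combination of projectivity (hypothesis (1)) with the counting hypothesis (2) on split problems.

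To prove homogeneity I would first invoke Lemma~\ref{FJ-lemma}. By its ``moreover'' clause, in order to solve every embedding problem of the homogeneous type (those with $\tilde H\subset M(\tilde\Gamma)$) it is enough to produce $m$ solutions to every nontrivial finite embedding problem $\phi\colon\Pi\onto G$, $\alpha\colon\Gamma\onto G$ for which $H=\ker(\alpha)$ is a minimal normal subgroup of $\Gamma$ with $H\subset M(\Gamma)$. So I would fix such a finite problem and manufacture $m$ solutions from (1) and (2). Projectivity gives a weak solution $\psi_0\colon\Pi\to\Gamma$ with $\alpha\circ\psi_0=\phi$; since $\phi$ is onto we have $\psi_0(\Pi)\cdot H=\Gamma$, and because $H\subset M(\Gamma)$ the Frattini (non-generator) property forces $\psi_0(\Pi)=\Gamma$. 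Hence the weak solution is automatically a proper one. This is the step where projectivity is indispensable: it is precisely what rescues solvability of the (generally non-split) problem.

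Next I would convert the count of solutions of this non-split problem into a count for a \emph{split} problem, where hypothesis (2) applies. Form the fiber product $\Gamma_2=\Gamma\times_G\Gamma=\{(\gamma_1,\gamma_2):\alpha(\gamma_1)=\alpha(\gamma_2)\}$ with projections $pr_1,pr_2\colon\Gamma_2\onto\Gamma$. The sequence $1\to H\to\Gamma_2\xrightarrow{pr_1}\Gamma\to 1$, with $H$ identified with $\{1\}\times H$, is split by the diagonal, so $(\psi_0,\,pr_1)$ is a nontrivial finite split embedding problem for $\Pi$. Moreover $H$ is minimal normal in $\Gamma_2$: conjugation of $\Gamma_2$ on $\{1\}\times H$ factors through $pr_2$, so a subgroup of $H$ normal in $\Gamma_2$ is exactly a $\Gamma$-invariant subgroup of $H$, and minimal normality in $\Gamma$ leaves only $1$ and $H$. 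Hypothesis (2) then furnishes $m$ proper solutions of this split problem.

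Finally I would transport the count back. Any solution $\mu\colon\Pi\to\Gamma_2$ with $pr_1\circ\mu=\psi_0$ has the form $\mu(x)=(\psi_0(x),\nu(x))$, where $\nu\colon\Pi\to\Gamma$ satisfies $\alpha\circ\nu=\alpha\circ\psi_0=\phi$, i.e. $\nu$ solves the original problem; further $\nu=pr_2\circ\mu$ is onto whenever $\mu$ is, and $\mu$ is recovered from $\nu$, so $\mu\mapsto\nu$ is a bijection between proper solutions. Hence the $m$ solutions of the split problem yield $m$ distinct proper solutions of the original finite problem, and Lemma~\ref{FJ-lemma} delivers homogeneity; the freeness then follows from the characterization cited above. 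I expect the main obstacle to be exactly this split/non-split bridge — packaging the given Frattini problem as a split problem with minimal normal kernel and checking that the solution count transfers faithfully — together with the routine verifications that $\Gamma_2$ inherits minimal normality of $H$ and that ``$m$ solutions'' is preserved under the projection $pr_2$.
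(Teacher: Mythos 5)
Your overall architecture tracks the paper's: reduce homogeneity to producing $m$ solutions of finite embedding problems via the ``moreover'' clause of Lemma \ref{FJ-lemma}, use projectivity of $\Pi$ to deal with non-splitness, feed the count into hypothesis (2), and finish with \cite[Theorem 8.5.2]{RZ}. Your mechanism for producing the $m$ solutions, however, is genuinely different: the paper takes a \emph{weak} solution, forms the semidirect product $\Gamma'=H\rtimes G'$ on its image $G'$, and runs an induction on $|H|$, while you promote the weak solution to a proper one $\psi_0$ by a Frattini argument and pass to the fiber product $pr_1\colon\Gamma\times_G\Gamma\onto\Gamma$. The parts you do verify are correct: $\ker(pr_1)=1\times H$ is minimal normal in $\Gamma\times_G\Gamma$, the problem is split by the diagonal, and $\mu\mapsto pr_2\circ\mu$ sends distinct proper solutions over $\psi_0$ to distinct proper solutions of the original problem (you only need injectivity here, not the claimed bijection).

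The gap is in the sentence ``Hypothesis (2) then furnishes $m$ proper solutions of this split problem.'' Hypothesis (2) does not apply to every nontrivial finite split problem with minimal normal kernel: it also demands $\ker\subseteq M(\cdot)$ of the auxiliary group, i.e.\ you must check $1\times H\subseteq M(\Gamma\times_G\Gamma)$, and you never do. In fact it fails. Under the paper's literal definition of $M(\cdot)$ (the Frattini subgroup, which is what your non-generator argument uses), \emph{no} nontrivial split problem can ever satisfy it: the diagonal $\Delta(\Gamma)$ is a complement of $1\times H$, hence lies in some maximal subgroup $M_0$, and $1\times H\subseteq M(\Gamma\times_G\Gamma)\subseteq M_0$ would force $M_0\supseteq (1\times H)\cdot\Delta(\Gamma)=\Gamma\times_G\Gamma$. (The same argument shows that, read this way, hypothesis (2) is vacuous; the reading under which the theorem and the citation of \cite[Theorem 8.5.2]{RZ} make sense is $M(\cdot)=$ intersection of the maximal open \emph{normal} subgroups, as in \cite{RZ}.) Under that reading your construction still fails in precisely the cases that matter: whenever $H$ is central in $\Gamma$ --- for instance $\Gamma=\ZZ/p^2\onto G=\ZZ/p$ with $H=p\ZZ/p^2\subseteq M(\ZZ/p^2)$ --- the diagonal $\Delta(\Gamma)$ is normal in $\Gamma\times_G\Gamma$ with quotient isomorphic to $H$, so it sits inside a maximal normal subgroup avoiding $1\times H$; your split problem is then outside the scope of (2), and the $m$ solutions are simply not available. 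Note also that under this normal-subgroup reading your step ``the weak solution is automatically proper'' needs repair as well, since the non-generator property fails for $M(\cdot)$ (e.g.\ $\langle(12)\rangle\cdot M(S_5)=S_5$ with $M(S_5)=A_5$). To be fair, this Frattini/minimal-normal bookkeeping for the auxiliary split group is the delicate crux of the paper's own proof too (its appeal to (2) for $H\rtimes G'$ faces the same verification, which also degenerates exactly for central $H$); but your fiber-product shortcut does not circumvent the difficulty --- it runs into it head-on, so as written the proof of homogeneity is incomplete.
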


\begin{proof}
First of all, let us observe that (1), (2) and (3) allow us to assume that every finite
nontrivial (not necessarily split) embedding problem \eqref{EP0} with $H\subset M(\Gamma)$ 
has $m$ solutions. The proof is via induction on $|H|$. 

Note that $p(H)$ is a normal subgroup of $\Gamma$. 
Since $H$ is a minimal normal subgroup of $\Gamma$, either $p(H)=H$ or $p(H)$ is trivial. 
If $p(H)=H$ then (2) guarantees $m$ solutions to the embedding problem \eqref{EP0}
 
If $p(H)$ is trivial then $H$ is a prime-to-$p$ group. The embedding problem
\eqref{EP0} has a weak solution $\phi$ since $\Pi$ is projective. Let $G' \le \Gamma$
be the image of $\phi$. The subgroup $G'$ acts on $H$ via conjugation so we can define 
$\Gamma'=H\rtimes G'$ and get the following embedding problem:
\begin{equation}\label{EP1}
\xymatrix{
    &          &               &\Pi \ar[d]^{\phi} \ar @{-->} [dl]\\
    1\ar[r] & H \ar[r] & \Gamma' \ar[r] & G' \ar[r]\ar[d] & 1\\
    & & & 1 }
\end{equation}

Also $\Gamma'$ surjects onto $\Gamma$ under the homomorphism sending
$(h,g) \mapsto hg$. So it is enough to find $m$ solutions to the embedding
problem \eqref{EP1}.
Now if $H$ is not a minimal normal subgroup of $\Gamma'$ then there exist $H'$
proper nontrivial subgroup of $H$ and normal in $\Gamma'$. Quotienting by $H'$ we
get the following embedding problem:
  $$
  \xymatrix{
    &          &               &\Pi \ar[d]^{\phi} \ar @{-->} [dl]\\
    1\ar[r] & H/H' \ar[r] & \Gamma'/H' \ar[r] & G' \ar[r]\ar[d] & 1\\
    & & & 1 }
  $$
 which has $m$ solutions by induction hypothesis (since $|H/H'|<|H|$). 
 For each solution $\theta'$ to the above, the following embedding 
 problem:
   $$
  \xymatrix{
    &          &               &\Pi \ar[d]^{\theta'} \ar @{-->} [dl]\\
    1\ar[r] & H' \ar[r] & \Gamma' \ar[r] & \Gamma'/H' \ar[r]\ar[d] & 1\\
    & & & 1 }
  $$
also has $m$ solutions by induction hypothesis as $|H'|<|H|$. Let $\theta$ be 
solution to this embedding problem then it  
is in fact a solution to \eqref{EP1} as well. Note that distinct solutions for \eqref{EP1}
induce distinct solutions for \eqref{EP0} by \cite[Lemma 2.4]{HS}.
Finally if $H$ is a minimal normal 
subgroup of $\Gamma'$ then hypothesis (3) guarantees $m$ solutions to \eqref{EP1}.
Lemma \ref{FJ-lemma} yields $\Pi$ is homogeneous.
The rest of the statement follows from \cite[Theorem 8.5.2]{RZ} and 
\cite[Lemma 3.5.4]{RZ}. 
\end{proof}

Let $\Gamma$ be a finite group, $H$ a normal 
subgroup of $\Gamma$ contained in $M(\Gamma)$, $G=\Gamma/H$ and $\alpha:\Gamma\to G$ be 
the quotient map. Let $\Pi$ be a closed normal subgroup of 
a profinite group $\Theta$.
\begin{lm}\label{restriction-lemma}
Suppose we have a surjection $\psi$ from $\Theta \to G$
which restricted to $\Pi$ is also a surjection.
$$
\xymatrix{
  & \Pi \ar@{^{(}->}[r]\ar@{->>}[rd] & \Theta \ar@{->>}[d]^{\psi}\ar@{->>}[dl]\\
  H\ar@{^{(}->}[r] &\Gamma\ar@{->>}[r]_{\alpha} &G
}
$$ 
Let $\phi$ be a surjection of $\Theta$ onto $\Gamma$ such that 
$\psi=\alpha\circ\phi$.
Then the restriction of $\phi$ to $\Pi$ is a surjection 
onto $\Gamma$.
\end{lm}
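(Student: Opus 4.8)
The plan is to test the two lifts against the diagonal of the fiber product $\Gamma \times_G \Gamma$ and to use minimality of $H$ to collapse the image of $\Pi$ onto the full diagonal. Write $\Phi = (\phi_1,\phi_2)\colon \Theta \to \Gamma \times_G \Gamma$; this lands in the fiber product precisely because $\alpha\circ\phi_1 = \alpha\circ\phi_2 = \psi$. Let $\Delta = \{(\gamma,\gamma) : \gamma \in \Gamma\}$ be the diagonal. The assertion ``$\phi_1|_\Pi$ and $\phi_2|_\Pi$ agree'' is equivalent to ``$\Phi(\Pi) \subseteq \Delta$'', so there are two tasks: (a) show each $\phi_i|_\Pi$ is onto $\Gamma$, and (b) show that if $\Phi(\Pi)\subseteq \Delta$ then in fact $\phi_1 = \phi_2$ on all of $\Theta$, contradicting their distinctness.

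First I would dispatch surjectivity (a). Since $\Pi$ is normal in $\Theta$ and $\phi_i$ is onto $\Gamma$, the image $\phi_i(\Pi)$ is a normal subgroup of $\Gamma$. Because $\psi|_\Pi = \alpha\circ\phi_i|_\Pi$ is onto $G = \Gamma/H$, this normal subgroup satisfies $\phi_i(\Pi)\cdot H = \Gamma$. Here the hypothesis $H \subseteq M(\Gamma)$ enters: as $H$ lies in the Frattini subgroup, its elements are nongenerators, so $\phi_i(\Pi)\cdot M(\Gamma) = \Gamma$ forces $\phi_i(\Pi) = \Gamma$. Thus each restriction is already a surjection, and moreover $\Phi(\Pi)$ projects onto $\Gamma$ under the first coordinate.

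Next, distinctness (b). Assume $\Phi(\Pi)\subseteq\Delta$. Together with the surjectivity just proved, the first projection sends $\Phi(\Pi)$ onto $\Gamma$, so $\Phi(\Pi) = \Delta$; in particular $\Delta$ is normal in $\Phi(\Theta)$, being the image of the normal subgroup $\Pi$. Set $K = \Phi(\Theta)\cap(\{1\}\times H)$, which is normal in $\Phi(\Theta)$ because $\{1\}\times H$ is normal in $\Gamma\times_G\Gamma$. Since $K$ is the kernel of the first projection on $\Phi(\Theta)$, a short order count gives $\Phi(\Theta) = \Delta K$ with $\Delta \cap K = \{1\}$, so the second-coordinate image $H_2$ of $K$ is a normal subgroup of $\Gamma$ contained in $H$. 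By minimality of $H$, either $H_2 = \{1\}$, whence $K = \{1\}$ and $\Phi(\Theta) = \Delta$, i.e. $\phi_1 = \phi_2$; or $H_2 = H$. I would rule out the second alternative and thereby finish.

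The main obstacle is exactly ruling out $H_2 = H$. Unwinding the internal direct product $\Phi(\Theta) = \Delta \times K$ shows that every element of $H_2$ commutes with every element of $\Gamma$, i.e. $H_2 \subseteq Z(\Gamma)\cap H$; equivalently, the difference cocycle $\theta \mapsto \phi_1(\theta)^{-1}\phi_2(\theta)$ takes values centralizing $\phi_1(\Pi) = \Gamma$. When $H$ is non-abelian this is immediate, since a non-abelian minimal normal subgroup is centerless, so $Z(\Gamma)\cap H \subseteq Z(H) = \{1\}$ and $H_2 = H$ is impossible. The delicate case is abelian $H$, where I must use the structural hypotheses on $\Gamma$ (the semidirect decomposition together with $H \subseteq M(\Gamma)$) to see that $H$ cannot be centralized by the whole image, so that $H_2 = H$ is incompatible with $\phi_1$ being onto $\Gamma$. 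Making this last step airtight is where I expect the real work to lie; it is the only place where more than formal manipulation of the fiber product is required.
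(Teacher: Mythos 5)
Your part (a) is essentially the paper's opening step: the paper likewise observes that $\phi_i(\Pi)$ is normal in $\Gamma$ and maps onto $G$, and concludes $\phi_i(\Pi)=\Gamma$ from $H\subseteq M(\Gamma)$ via the non-generator property. The divergence is in part (b), and the hole you flag there is genuine as the proposal stands: nothing you wrote rules out $H_2=H$ when $H$ is abelian. But the missing step is a one-line consequence of the hypotheses you already invoke. If $H_2=H$ then $H\subseteq Z(\Gamma)$, so the $G$-action defining the semidirect product is trivial and $\Gamma=H\times G$. Since $\phi_1\neq\phi_2$ together with $\alpha\circ\phi_1=\alpha\circ\phi_2$ forces $H\neq\{1\}$, the complement $G$ is a proper subgroup; any maximal subgroup $M$ of $\Gamma$ containing $G$ then satisfies $H\not\subseteq M$ (else $M\supseteq HG=\Gamma$), contradicting $H\subseteq M(\Gamma)$. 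So your plan does close. (In fact this last argument needs no centrality at all: a complement of a nontrivial $H$ always lies in a maximal subgroup avoiding $H$, so the hypotheses ``split'' and ``$H\subseteq M(\Gamma)$'' already force $H=\{1\}$; the lemma as stated has no nonvacuous instances. That degeneracy is in the paper's formulation, not something your argument introduces.)

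The paper's own proof runs in the opposite direction, and the comparison is instructive. Instead of assuming the restrictions agree and deriving $\phi_1=\phi_2$, the paper proves that $\phi=(\phi_1,\phi_2)\colon\Theta\to\Gamma\times_G\Gamma$ is onto the \emph{whole} fiber product: with $K_i=\ker\phi_i$ it asserts $K_1\neq K_2$, so that $\phi_2(K_1)$ is a nontrivial normal subgroup of $\Gamma$ contained in $H$, hence equal to $H$ by minimality, and an index count then gives $[\Theta:\ker\phi]=|G||H|^2$. It then repeats your part (a) one level up, claiming $H\times H\subseteq M(\Gamma\times_G\Gamma)$, so that $\phi(\Pi)$, being normal in $\Gamma\times_G\Gamma$ and mapping onto $G$, equals the whole fiber product; in particular $\phi(\Pi)$ is not contained in the diagonal, which is exactly the desired distinctness of the restrictions. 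Your route is more economical at two points: it never needs the implication $\phi_1\neq\phi_2\Rightarrow K_1\neq K_2$ (not automatic, since distinct epimorphisms can share a kernel, differing by an automorphism of $\Gamma$ that induces the identity on $G$; the paper leaves this unjustified), and it never needs the Frattini containment to pass to the fiber product (also asserted without proof there). The price is the abelian/non-abelian case analysis, which, as above, the splitting hypothesis settles in one line.
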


\begin{proof}
Let us first note that $\phi(\Pi)$ is  a normal subgroup of $\Gamma$ and 
$\alpha(\phi(\Pi))=G$. Suppose $\phi(\Pi)$ is a proper normal subgroup of $\Gamma$,
then there exists a maximal normal proper subgroup $\Gamma'$ of $\Gamma$ 
containing $\phi(\Pi)$. Since  $H$ is contained in $ M(\Gamma)$, 
$H\subset \Gamma'$.
Also $\alpha(\Gamma')=G$, so $\Gamma'=\Gamma$ contradicting that $\Gamma'$ is 
a proper subgroup of $\Gamma$. Hence $\phi(\Pi)=\Gamma$.

\end{proof}

\section{Solutions to embedding problems} \label{solutionsEP}

A morphism of schemes, $\Phi\colon X\rightarrow Y$, is said to
be a \emph{cover} if $\Phi$ is finite, surjective and generically separable.
For a finite group $G$, $\Phi$ is said to be a \emph{$G$-cover} 
(or a $G$-Galois cover) if in
addition there exists a group monomorphism $G\rightarrow \Aut_Y(X)$
which acts transitively on the geometric generic fibers of $\Phi$.
Let $k_0\subset k$ be fields. For a $k_0$-scheme $X$, 
$X\times_{\spec(k_0)}\spec(k)$ will also be denoted by $X\otimes_{k_0} k$.
\begin{df} \label{df:fieldofdefn}
Let $X$ and $Y$ be varieties over $k$ and a $k$-morphism $f:X\to Y$ be a cover. 
We shall say that \emph{$f$ is defined over $k_0$} if there exist a cover 
$f_0:X_0\to Y_0$ of $k_0$-varieties which is a $k_0$-morphism and a 
cover $g:Y\to Y_0\otimes_{k_0} k$ which is a $k$-morphism 
such that the normalized pull-back of $f_0\times id_{k}:X_0\otimes_{k_0}k\to Y_0\otimes_{k_0}k$ along $g$ restricted to a dominating component $Z$ of the normalization of $X_0\times_{Y_0} Y$ factors through $f$, i.e.,
\[
\xymatrix{ 
   & X\ar[rd]^f \\
  Z \ar[rr]\ar[d]\ar[ru] & & Y\ar[d]^g\\
  X_0\otimes_{k_0} k\ar[rr]^{f_0\times id_k} & & Y_0\otimes_{k_0} k
}
\]
\end{df}
Note that $k(Y)$ and $k(X_0)$ can be viewed as subfields of some fixed algebraic closure of $k(Y_0)$ and $k(Z)$ is the compositum $k(X_0)k(Y)$. So $f$ is defined over $k_0$ implies that $k(X)$ is contained in the compositum $k(X_0)k(Y)$.

Let $Y$ be an irreducible curve over $k$. We will say that a \emph{finite field extension $L/k(Y)$ is defined over $k_0$} if the natural morphism ${\bar Y}^L\to Y$ is defined over $k_0$, where ${\bar Y}^L$ is the normalization of $Y$ in $L$.

\begin{rmk}\label{basicproperty}
 Let $W\to X$ and $X\to Y$ be covers of $k$-varieties. If the composition $W\to Y$ is defined over $k_0$ then the covers $W\to X$ and $X\to Y$ are defined over $k_0$.
\end{rmk}

\begin{lm}\label{lm:fieldofdefn_extn}
 Let $k_0\subset k$ be algebraically closed fields. Let $f:X\to Y$ be a cover of smooth proper $k$-curves, $f_0:X_0\to Y_0$ be a cover of smooth proper $k_0$-curves and $g:Y\to Y_0\otimes_{k_0} k$ be a cover. If $k(X_0)k(Y)=k(X)$ then $X\to Y$ is defined over $k_0$.
\end{lm}

\begin{proof}
 Since $k(X_0)\subset k(X)$, we have a dominant rational map from $X\to X_0\otimes_{k_0}k$. Moreover since $X_0$ and $X$ are smooth and proper curves, the rational map $X\to X_0\otimes_{k_0}k$ is a proper surjective morphism. So we have the following commutative diagram.
 \[
\xymatrix{ 
  X \ar[r]^f \ar[d]  & Y\ar[d]^g\\
  X_0\otimes_{k_0} k\ar[r]^{f_0} & Y_0\otimes_{k_0} k
}
\]
 Finally, since $k(X)$ is the compositum of $k(X_0)$ and $k(Y)$, $X$ is the normalization of a dominating component of $X_0\times_{Y_0} Y$.
\end{proof}

\begin{lm}\label{reductionforgaloiscover}
 Let $k_0\subset k$ be algebraically closed fields and $G$ a finite group. Let 
 $f:X\to Y$ be a $G$-Galois cover of smooth $k$-curves. If $f$ is defined over $k_0$ then the covers $f_0:X_0\to Y_0$ and $g:Y\to Y_0\otimes_{k_0} k$ in Definition \ref{df:fieldofdefn} above can be chosen so that $f_0$ is a $G$-Galois cover and $X$ is the normalization of $X_0\times_{Y_0} Y$.   
\end{lm}

\begin{proof}
 First we observe that $f_0:X_0\to Y_0$ in Definition 3.1 is a cover of $k_0$-curves. By taking Galois closure $L$ of the field extension $k_0(X_0)/k_0(Y_0)$, and replacing $X_0$ by the normalization of $X_0$ in $L$ we may assume $f_0$ is a Galois cover and $X_0$ is normal. By assumption, there is a dominating component $Z$ of the normalization of $X_0\times_{Y_0} Y$ lying above $X$. Note that $Z$ is the normalization of $X$ in $k(Y)k(X_0)$. 
 
 We observe that $k_0(Y_0)\subset k_0(X_0)\cap k(Y)$ and the latter field is a finite extension of $k_0(Y_0)$. Let $Y_1$ be the normalization of $Y_0$ in $k_0(X_0)\cap k(Y)$. We have $k(Y)$ and $k(X_0)$ are linearly disjoint over $k(Y_1)$. Since $Y$ is a proper normal curve, $k(Y)/k(Y_0)$ is a finite separable extension, and $k(Y_0)\subset k(Y_1)\subset k(Y)$, by valuative criterion for properness there exist a cover  $g':Y\to Y_1\otimes_{k_0} k$ such that the morphism $g:Y\to Y_0\otimes_{k_0} k$ factors through $g'$. 

 Similarly, since $X_0$ is a proper normal curve, the Galois cover $f_0:X_0\to Y_0$ factors through a $k_0$-morphism $f'_0:X_0\to Y_1$ which is also a Galois cover of $k_0$-curves. Let $G'=\Gal(k(X_0)/k(Y_1))$. Linear disjointness of $k(X_0)$ and $k(Y)$ over $k(Y_1)$ implies that the Galois group of $k(Z)=k(X_0)k(Y)$ over $k(Y)$ is also $G'$. Since $k(X)/k(Y)$ is also Galois with Galois group $G$ and $k(X)\subset k(Z)$, we have a group epimorphism $G'\onto G$. Let $H$ be its kernel. Then $k(X)= k(Z)^H$, the subfield fixed by $H$.
 Let $X_1$ be the normalization of $Y_1$ in the fixed field $k_0(X_0)^H$, then we obtain a $k_0$-morphism $f_1:X_1\to Y_1$ which is a $G$-cover. Moreover, $H$ viewed as a subgroup of $\Gal(k(Z)/k(Y))=G'$ acts trivially on $k(X_1)k(Y)$. Since $k(X_1)\subset k(X_0)$, $k(X_1)$ and $k(Y)$ are also linearly disjonit over $k(Y_1)$. So the Galois group $\Gal(k(X_1)k(Y)/k(Y))=\Gal(k(X_1)/k(Y_1))=G$. Hence $k(X_1)k(Y)=k(Z)^H=k(X)$. Hence $X$ is a dominating component of the normalization of $X_1\times_{Y_1} Y$. Also $k(X_1)$ and $k(Y)$ are linearly disjoint over $k(Y_1)$, so the normalization of $X_1\times_{Y_1} Y$ is connected. Hence $X$ is the normalization of $X_1\times_{Y_1} Y$. The assertion of the lemma is now obtained by replacing $X_0$, $Y_0$, $f_0$ and $g$ in Definition \ref{df:fieldofdefn} by $X_1$, $Y_1$, $f_1$ and $g'$ respectively.
\end{proof}

\begin{lm}\label{lm:compositum}
 Let $k_0\subset k$ be algebraically closed fields and $Y$ a smooth $k$-curve. Let the finite Galois extensions $L_1/k(Y)$ and $L_2/k(Y)$ be defined over $k_0$. Then the compositum $L_1L_2/k(Y)$ is also defined over $k_0$. Here all the fields are subfields of some fixed algebraic closure of $k(Y)$.
\end{lm}

\begin{proof}
 Let $X_1$, $X_2$ and $X$ be the normalization of $Y$ in $L_1$, $L_2$ and $L_1L_2$  respectively. By assumption the covers $X_1\to Y$ and $X_2\to Y$ are defined over $k_0$. By Lemma \ref{reductionforgaloiscover} there exist covers $X_{10}\to Y_{10}$ and $X_{20}\to Y_{20}$ of $k_0$-curves and covering morphisms $Y\to Y_{10}\otimes_{k_0}k$ and $Y\to Y_{20}\otimes_{k_0}k$ such that $X_1$ is the normalization of $X_{10}\times_{Y_{10}}Y$ and $X_2$ is the normalization of $X_{20}\times_{Y_{20}}Y$. In particular, $k(X_1)=k(X_{10})k(Y)$ and $k(X_2)=k(X_{20})k(Y)$. Let $X_0$ be the normalization of $X_{10}$ in $k_0(X_{10})k_0(X_{20})$. Then $X_0\to Y_{10}$ is a cover of smooth $k_0$-curves. Moreover, $$k(X_0)k(Y)=k(X_{10})k(X_{20})k(Y)=k(X_1)k(X_2)=L_1L_2=k(X)$$
 So by Lemma \ref{lm:fieldofdefn_extn}, $X\to Y$ is defined over $k_0$. Hence the extension $L_1L_2/k(Y)$ is defined over $k_0$.
\end{proof}

\begin{lm}\label{lm:descend}
 Let $k_0\subset k$ be algebraically closed fields. Let $f:X\to Y$ be a $G$-cover of smooth proper $k$-curves. Let $Z_0$ be a smooth proper $k_0$-curve and $Z=Z_0\otimes_{k_0} k$. Let $Y\to Z$ be a cover of $k$-curves and $Z'\to Z_0$ be a cover of $k_0$-curves. Let $X'$ and $Y'$ be the normalization of a dominating component of $X\times_{Z_0} Z'$ and $Y\times_{Z_0} Z'$ respectively. Assume that the cover $X'\to Y'$ induced from $f$ is also a $G$-cover and that it is defined over $k_0$. Then $f:X\to Y$ is also defined over $k_0$.
\end{lm}

\begin{proof}
 Since $X'\to Y'$ is a $G$-cover defined over $k_0$, by Lemma \ref{reductionforgaloiscover}, there exist a $G$-cover of proper $k_0$-curves $X_0\to Y_0$ and a cover $Y'\to Y_0\otimes_{k_0} k$ such that $X'$ is the normalization of $X_0\times_{Y_0} Y'$. Note that $k(X_0)k(Y')=k(X')=k(X)k(Z')$ and $k(Y')=k(Y)k(Z')$. 
 We have the following diagram: \[\xymatrix{
  & & & & X'\ar[rd]\ar[lld]_G\ar@{-->}[ddd] \\
  & & Y'\ar[rd]\ar[lld]\ar@{-->}[ddd] & & & X\ar[lld]_G\\
  Z'\otimes k\ar[rd]& & & Y\ar[lld] \\
  & Z & & & X_0\otimes k \ar[lld]_G\\
  & & Y_0\otimes k 
  }
 \]
 Let $L=k_0(X_0)k_0(Z')$ be the compositum of fields in $k(X')$. Note that $k_0(Z_0)\subset L$. Let $X_1$ be the normalization of $Z_0$ in $L$. So we have a cover $X_1\to Z_0$ of $k_0$-curves. Also we have a cover $Y\to Z$. Let $X_1'$ be the normalization of a dominating component of $X_1\times_{Z_0} Y$. We claim that the  morphism $X'$ to $X_1'$ induced from $k(X_1')\subset k(X')$ is an isomorphism. Since $X_1'$ and $X'$ are both smooth proper curves, it is enough to verify that $k(X_1')=k(X')$. But this is clear as, $$k(X_1')=k(X_1)k(Y)=k(X_0)k(Z')k(Y)=k(X_0)k(Y')=k(X')$$
 So $X'=X_1'$ dominates $X$, hence $X\to Y$ is defined over $k_0$.
\end{proof}

Next we shall see that if a Galois cover of curves is of degree prime to the characteristic of the base field then there is almost no distinction between the field of definition of the cover and the field of definition of the target curve.

\begin{pro}\label{fieldofdefn_primetop}
 Let $k$ be an algebraically closed field, $C$ a smooth affine $k$-curve and $f:U\to 
 C$ be an \'etale Galois cover of degree prime to $\Char(k)=p$. If $C$ is defined over 
 $k_0\subset k$ then $f$ is defined over the algebraic closure $\bar k_0$ of $k_0$.
\end{pro}

\begin{proof}
 Let $C_{\bar k_0}$ be the $\bar k_0$-curve such that $C=C_{\bar k_0}\otimes_{\bar k_0} k$. Let $H$ be the prime-to-$p$ group such that the $k$-morphism $f:U\to C$ is an \'etale $H$-cover. Let $X_{\bar k_0}$ be the smooth completion of $C_{\bar k_0}$, $g$ be the genus of $X_{\bar k_0}$ and $r$ be the number of points in $X_{\bar k_0}\setminus C_{\bar k_0}$. Note that $X=X_{\bar k_0}\otimes_{\bar k_0}k$ is the smooth completion of $C$, genus of $X$ is $g$ and the number of points in $X\setminus C$ is $r$. Let $\eta_H$ be the number of epimorphisms from the free group on $2g+r-1$ elements $F_{2g+r-1}$ to $H$.

 By Grothendieck's Riemann Existence theorem \cite[XIII, Corollary 2.12]{SGA1} there exist exactly $\eta_H$ distinct $\bar k_0$-morphisms $f_i:Y_i\to X_{\bar k_0}$ which are $H$-covers \'etale over $C_{\bar k_0}$ for $1\le i\le \eta_H$. By the base change to $\spec(k)$ we obtain $\eta_H$ distinct $H$-coverings $f_i\times id_{\spec(k)}:Y_i\otimes_{k_0}k\to X$ which are \'etale over $C$.

 Again by Grothendieck's Riemann Existence theorem there are exactly $\eta_H$ distinct $H$-coverings of $X$ \'etale over $C$. So the given cover $f:U\to C$ is the restriction of $f_i\times id_k: Y_i\otimes_{k_0}k \to X$ to the preimage of $C$ for some $i$ between $1$ and $\eta_H$. Hence the $k$-morphism $f$ is defined over $\bar k_0$.
\end{proof}

The hypothesis that the degree of $f$ is prime-to-$p$ in the above proposition is necessary as the following example of an Artin-Schreier cover shows.
\begin{ex}
 Let $k_0\subset k$ be algebraically closed fields and $\phi:\Aff^1_z \to \Aff^1_x$ be a $\ZZ/p\ZZ$-cover given by $z^p-z-ax$ for some $a$ in $k$. The cover $\phi$ is defined over $k_0$ iff $a\in k_0$. 
\end{ex}

\begin{pro}\label{pro:fieldofdefn}
 Let $k_0 \subset k$ be algebraically closed fields of characteristic $p>0$. Let $X$ be a $k_0$-curve and $C \subset X$ a smooth open $k_0$-curve. Let $\phi:T\to X\otimes_{k_0}k$ be a $\ZZ/p\ZZ$-cover of smooth proper $k$-curves \'etale over $C$ such that $T$ is not defined over $k_0$. Let $\psi: Y\to T$ be an $H$-cover of smooth proper $k$-curves \'etale over $\phi^{-1}(C)$. If $\psi$ is defined over $k_0$ then $\psi$ is the pull-back of an $H$-cover of $X$.
\end{pro}

\begin{proof}
 Since $\psi$ is defined over $k_0$, by Lemma \ref{reductionforgaloiscover} there exists an $H$-cover $\psi_0:Y_0\to T_0$ of $k_0$-curves and a cover $g:T\to T_0\otimes_{k_0}k$ of $k$-curves such that $Y$ is the normalization of $Y_0\times_{T_0} T$. In particular, $k(Y_0)$ and $k(T)$ are linearly disjoint over $k(T_0)$.

 Note that $k_0(T_0)$ and $k_0(X)$ are subfields of $k(T)$ and $k_0(T_0)k_0(X)$ is a field of transcendence degree 1 over $k_0$. Let $T_1$ be the smooth proper $k_0$-curve such that $k_0(T_1)=k_0(T_0)k_0(X)$. Note that $k(X)\subset k(T_1)\subset k(T)$. Since $[k(T):k(X)]=p$, $k(T_1)=k(T)$ or $k(T_1)=k(X)$. As $T_1$ is a $k_0$-curve and $T$ is not defined over $k_0$, $k(T_1)\ne k(T)$. Hence $k(T_1)=k(X)$, so $T_1=X$. But $k_0(T_0)\subset k_0(T_1)=k_0(X)$, so there exist a proper surjective morphism $h:X\to T_0$ of $k_0$-curves. Also we have $k(T_0)\subset k(X)\subset k(T)$ induced from the proper surjective morphisms $T\to X\otimes_{k_0}k$, $T\to T_0\otimes_{k_0} k$ and $X\to T_0$. So the morphism $g:T\to T_0\otimes_{k_0} k$ factors through $X\otimes_{k_0}k$, i.e., we have a proper morphism $\phi:T\to X\otimes_{k_0}k$ such that $(h\otimes_{k_0}k)\circ\phi=g$
 
 Let $Y'$ be the normalization of $Y_0\times_{T_0} X$. Since $k(X)\subset k(T)$, $k(X)$ and $k(Y_0)$ are linearly disjoint over $k(T_0)$. Hence $Y'$ is smooth and connected. After base change to $k$, and looking at the normalized pull-back to $T$, we obtain:
 \[\xymatrix {Y\ar[d]\ar[r]^{\psi} & T\ar[d]^{\phi}\\
Y'\otimes_{k_0}k\ar[d]\ar[r]       & X\otimes_{k_0}k\ar[d]^h\\
Y_0\otimes_{k_0}k\ar[r]_{\psi_0}      & T_0\otimes_{k_0}k }\]
Since $\psi_0$ is an $H$-cover, the morphism $Y'\to X$ is also an $H$-cover.
Since $\psi$ is the pull-back of $Y'\to X$ we are done.
\end{proof}

\begin{cor}
 Let $k_0 \subset k$ be as above. Let  $\phi:T\to \PP^1_x$ be a $\ZZ/p\ZZ$-cover of smooth proper $k$-curves ramified only at $x=\infty$, such that $T$ is not defined over $k_0$. Let $\psi: Y\to T$ be an \'etale $H$-cover of smooth proper $k$-curves where $H$ is a nontrivial prime-to-$p$ group. Then $\psi$ is not defined over $k_0$.
\end{cor}

\begin{proof}
 There are no \'etale $H$-covers of $\Aff^1_x$, so the corollary follows from Proposition \ref{pro:fieldofdefn}.
\end{proof}

Let $k$ be an uncountable algebraically closed field of characteristic $p$.
Let $K^{un}$ denote the compositum (in some fixed algebraic
closure of $k(C)$) of the function fields of all Galois \'etale covers of $C$.
In these notations $\pi_1(C)=\Gal(K^{un}/k(C))$. 
\begin{df}\label{defn:P_g}
For each $g \ge 0$, let
$$P_g(C)=\cap \{\pi_1(Z) : Z\to C \text{ is an \'etale $\ZZ/p\ZZ$-cover and 
genus of } Z \ge g \}$$
be an increasing sequence of closed normal subgroups of $\pi_1(C)$. 
\end{df}

\begin{thm}\label{groupisfree}
Let $\Pi$ be a closed normal subgroup of $\pi_1(C)$ of rank $m$ such that 
$\pi_1(C)/\Pi$ is an abelian group and $\Pi$ is a subset of $P_g(C)$ for 
some $g \ge 0$. Then $\Pi$ is a homogeneous profinite group. 
Moreover if $R_S(\Pi)=m$ for every 
finite simple group $S$ then $\Pi$ is a free profinite group of rank $m$. 
\end{thm}
More precisely, in view of Theorem \ref{profinite}, it will be shown that the
finite embedding problem
  $$
  \xymatrix{
    &          &               &\Pi \ar[d] \ar @{-->} [dl]\\
    1\ar[r] & H \ar[r] & \Gamma \ar[r] & G \ar[r]\ar[d] & 1\\
    & & & 1 }
  $$
has $m$ solutions in the following situations to obtain the above result.
\begin{enumerate}
 \item The kernel $H$ is a quasi-$p$ minimal normal subgroup of $\Gamma$ contained in $M(\Gamma)$ 
  (theorem \ref{quasi-p}).
 \item The embedding problem is split and $H$ is a prime-to-$p$ minimal normal subgroup 
  of $\Gamma$ (theorem \ref{main}). 
\end{enumerate}

Let $K^b$ be the fixed subfield of $K^{un}$ under the action of $\Pi$. So by Galois theory $\Gal(K^{un}/K^b)=\Pi$. Note that the surjection from $\Pi$ to $G$ corresponds 
to a Galois extension $M\subset K^{un}$ of $K^b$ with Galois
group $G$. Since $K^b$ is an algebraic extension of $k(C)$ and $M$
is a finite extension of $K^b$, we can find a finite 
extension $L \subset K^b$ of $k(C)$ and $L'\subset K^{un}$ a
$G$-Galois extension of $L$ so that $M=K^bL'$. Let $\pi_1^L=\Gal(K^{un}/L)$.
The following figure provides a summary.

\begin{figure} [htbp]
\begin{equation*}
  \xymatrix{K^{un}  \\
            &   M\ar@{=}[r]\ar[ul] & L'K^{b} \ar[ull] \\
    &  K^{b}\ar[uul]^{\Pi}\ar[u] \ar[ru]_G & L' \ar[u]\\
    &  L \ar@/^2pc/[uuul]^{\pi_1 ^L} \ar[u] \ar[ru]_G\\
    &  k(C)\ar[u] }
\end{equation*}
\caption{}\label{toweroffields}
\end{figure} 

Let $D$ be the smooth completion of $C$, $X$ be the
normalization of $D$ in $L$ and $ \Phi_X':X\rightarrow D$ be the normalization
morphism. Then $X$ is an abelian cover of $D$ \'etale over
$C$ and its function field $k(X)$ is $L$. Let $W_X$ be
the normalization of $X$ in $L'$ and $\Psi_X$ be the corresponding
normalization morphism. Then $\Psi_X$ is \'etale away from the points lying above
$D\setminus C$ and $k(W_X)=L'$. Since $k$ is algebraically closed, $k(C)/k$ has a 
separating transcendence basis. By a stronger version of Noether normalization 
(for instance, see \cite[Corollary 16.18]{Eis}), there exist a finite generically
separable surjective $k$-morphism from $C$ to $\Aff^1 _x$, where $x$ denotes 
the local coordinate of the affine line. The branch 
locus of such a morphism is codimension $1$, hence this morphism is \'etale away from 
finitely many points. By translation we may assume none of these points maps to $x=0$.
This morphism extends to a finite surjective morphism $\Theta: D\rightarrow \PP^1 _x$.
Let $\Phi_X:X\rightarrow \PP^1 _x$ be the composition $\Theta\circ \Phi_X'$. 
Let $\{r_1,\ldots,r_N\}=\Phi_X^{-1}(\{x=0\})$, then $\Phi_X$
is \'etale at $r_1,\ldots,r_N$. Also note that $\Theta^{-1}(\{x=\infty\})=
D\setminus C$.  

Let $k_0$ be a countable algebraically closed subfield of $k$ such that $X$ and $W_X$ are defined over $k_0$ and the morphisms $\Phi_X$ and $\Psi_X$ are base change of $k_0$-morphism to $k$. We shall denote the corresponding $k_0$-curves and $k_0$-morphisms as well by $X$, $W_X$ and $\Phi_X$ and $\Psi_X$ to simplify notation. Let $\cF$ be a totally ordered set of algebraically closed subfields of $k$ containing $k_0$ under inclusion 
such that for any $k_{\alpha}$ in $\cF$, the transcendence degree of $k_{\alpha}$ over 
$\cup\{k'\in \cF:k'\subsetneq k_{\alpha}\}$ is infinite. 
Note that since $k$ is uncountable, we can find such an $\cF$ with the same 
cardinality as that of $k$.

\subsection{Prime-to-$p$ group}\label{sec:prime-to-p}

Let $\Gamma$ be a finite group, $H$ a prime-to-$p$ nontrivial normal subgroup of $\Gamma$
and $G$ a subgroup of $\Gamma$ such that $\Gamma=H\rtimes G$.
Let $\Phi_Y:Y\to \PP^1_y$ be the smooth $\ZZ/p\ZZ$-cover ramified only at $y=0$ 
given by $z^p-z-y^{-r}$ where $r$ is coprime to $p$ and can be chosen to ensure that
the genus of $Y$ is as large as desired. Recall that $\Pi\subset P_g(C)$ for some $g\ge 0$. The $r$ above is chosen so that
the genus of $Y$ is at least $g$ and greater than the number of generators for $H$. Let $F$ be the locus of $t-xy=0$ in $\PP^1_x\times_k
\PP^1_y\times_k\times_k\spec(k[[t]])$. Let $Y_F=Y\times_{\PP^1_y}F$, where the morphism from 
$F\to \PP^1_y$ is given by the composition of morphisms $F \into \PP^1_x\times_k\PP^1_y\times_k\spec(k[[t]])
 \onto \PP^1_y$. Similarly define $X_F=X\times_{\PP^1_x}F$. 
Let $T$ be the normalization of an irreducible dominating component of
the fiber product $X_F\times_F Y_F$. The situation so far can be 
described by the following picture:

\begin{equation*}
\xymatrix {
        &         & T\ar@<1ex>[dl] \ar@<1ex>[dr]\\
 &X_F\ar@<1ex>[dl]\ar@<1ex>[dr] &  &Y_F\ar@<1ex>[dl]\ar@<1ex>[dr]\\
X\ar@<1ex>[dr]&       &F\ar@<1ex>[dl]\ar@<1ex>[dr]&      &Y\ar@<1ex>[dl] \\
       &\PP^1 _x &             &\PP^1 _y \\
}
\end{equation*}

Let $\Psi_Y:W_Y\to Y$ be an \'etale $H$-cover of $Y$. Note that this is possible 
because $H$ is a prime-to-$p$ group and the genus of $Y$ is greater than the number of generators of $H$
(\cite[XIII, Corollary 2.12, page 392]{SGA1}). Let $T_X$ and $T_Y$ be the open subschemes of $T$ given by $x\ne 0$ and $y\ne 0$ respectively. Let $W_{XF}$ and $W_{XT}$ be the normalized pullback of $W_X\to X$ to $X_F$ and $T_X$ respectively. Similarly define $W_{YF}$ and $W_{YT}$ to be the normalized pullback of $W_Y\to Y$.

\begin{pro}\label{many-covers}
For each $k_{\alpha}\in \cF$, there exist an irreducible $\Gamma$-cover 
$W_{\alpha} \to T_{\alpha}$ of $k_{\alpha}$-curves with the following properties:
\begin{enumerate}
\item $T_{\alpha}$ is a cover of $X$ unramified over the preimage of $C$ in $X$, 
the composition $W_{\alpha} \to T_{\alpha} \to D$ is unramified over $C$ and 
the $G$-cover $W_{\alpha}/H\to T_{\alpha}$ is isomorphic to the cover 
$W_X\times_X T_{\alpha} \to T_{\alpha}$.
\item Let $H'\ne \{e\}$ be a quotient of $H$. If $W_b\to W_X$ is an $H'$-cover 
\'etale over the preimage of $C$ in $W_X$ then $k(W^b)k(T_{\alpha})$ and 
$k(W_{\alpha})$ are linearly disjoint over $k(W_X)k(T_{\alpha})$.
\item The cover $W_{\alpha}\to T_{\alpha}$ is not defined over 
$k'=\cup\{ k_{\beta}\in \cF:k_{\beta} \subsetneq k_{\alpha}\}$.
\item $T_{\alpha}$ is a Galois cover of $D$ with $\Pi\subset\Gal(K^{un}/k(T_{\alpha}))$, 
i.e., $k(T_{\alpha})\subset K^b$.
\end{enumerate}
\end{pro}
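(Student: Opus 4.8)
The plan is to obtain each $W_{\alpha}\to T_{\alpha}$ as the specialization at a value $t=t_{\alpha}$ of the formal parameter of the family $T$ (and of a $\Gamma$-cover built over it), choosing $t_{\alpha}\in k_{\alpha}$ transcendental over $k'$ so that its transcendence is exactly what forbids descent to $k'$. The base $T_{\alpha}\to D$ will be assembled from $X$ and an Artin--Schreier cover of $D$ carrying $t_{\alpha}$, while $W_{\alpha}\to T_{\alpha}$ will be produced by formally patching the $G$-cover $W_{X}\to X$ with the étale $H$-cover $W_{Y}\to Y$ through the splitting $\Gamma=H\rtimes G$.

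First I would describe the specialized base cover. For $t\ne 0$ the projection $F_{t}\to\PP^1_x$ is an isomorphism, and via $y=t/x$ the cover $Y_{F}$ restricts over $F_{t}$ to the Artin--Schreier cover $Z^{p}-Z=t^{-r}x^{r}$ of $\PP^1_x$. Pulling $x$ back through $\Theta\colon D\to\PP^1_x$ thus yields, for $t=t_{\alpha}$, an Artin--Schreier $\ZZ/p\ZZ$-cover $V_{\alpha}\to D$ given by $Z^{p}-Z=t_{\alpha}^{-r}(x\circ\Theta)^{r}$. Since $(x\circ\Theta)^{r}$ has poles only over $\Theta^{-1}(\{x=\infty\})=D\setminus C$ and $r$ is prime to $p$, the cover $V_{\alpha}$ is connected, étale over $C$, and of genus at least $g$ once $r$ is taken large. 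I then take $T_{\alpha}$ to be the specialization at $t=t_{\alpha}$ of $T$, i.e.\ the normalization of an irreducible dominating component of $X\times_{D}V_{\alpha}$; as $X\to D$ is abelian and $V_{\alpha}\to D$ is $\ZZ/p\ZZ$, the composite $T_{\alpha}\to D$ is Galois and unramified over $C$. This already gives (3): $k(T_{\alpha})=L\cdot k(V_{\alpha})$, and $\Pi$ fixes $L$ because $L\subset K^{b}$ and fixes $k(V_{\alpha})$ because $V_{\alpha}\to C$ is an étale $p$-cyclic cover of genus $\ge g$ and $\Pi\subset P_{g}(C)$, whence $k(T_{\alpha})\subset K^{b}$.

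Next I would build $W_{\alpha}\to T_{\alpha}$ by Harbater's formal patching over $\spec k[[t]]$, gluing along the node of the degenerate fibre $F_{0}=\{x=0\}\cup\{y=0\}$: over the $x$-branch one inserts the pullback of the $G$-cover $W_{X}\to X$, over the $y$-branch the étale $H$-cover $W_{Y}\to Y$, and the two are matched over the node using the splitting $\Gamma=H\rtimes G$. Grothendieck's existence theorem algebraizes the patched cover over the family, and its specialization at $t=t_{\alpha}$ is the desired $W_{\alpha}\to T_{\alpha}$. Connectivity of $W_{\alpha}$, so that it is a genuine irreducible $\Gamma$-cover, is where the hypothesis that the genus of $Y$ exceeds the number of generators of $H$ is used, in the usual Harbater fashion. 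The composite $W_{\alpha}\to T_{\alpha}\to D$ is unramified over $C$ because the branch locus of $W_{Y}\to Y$ lies over $y=0$, which $y=t/x$ carries into $x=\infty$, i.e.\ into $D\setminus C$; and $W_{\alpha}$ dominates $W_{X}$ since the quotient $W_{\alpha}/H$ is the pullback of $W_{X}$ to $T_{\alpha}$. This establishes (1).

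It remains to prove (2), which I expect to be the main obstacle and which is the genuinely new point for uncountable $k$. Viewing everything over $\PP^1_x$ exhibits, as a $\ZZ/p\ZZ$-subquotient of $T_{\alpha}\to\PP^1_x$, the connected Artin--Schreier cover $C_{\alpha}\colon Z^{p}-Z=t_{\alpha}^{-r}x^{r}$, branched only at $x=\infty$ with pole order $r$ prime to $p$; in the fixed coordinate $x$ its leading coefficient $t_{\alpha}^{-r}$ is an invariant of the cover of $\PP^1_x$ up to the $\FF_{p}^{*}$-action. Were $W_{\alpha}\to T_{\alpha}$ defined over the algebraically closed field $k'$ then, everything being taken compatibly over $D$ and hence over $\PP^1_x$, the whole tower and in particular the subquotient $C_{\alpha}$ would descend to $k'$, forcing $t_{\alpha}^{-r}\in k'$ up to $\FF_{p}^{*}$ and so $t_{\alpha}$ algebraic over $k'$, contrary to the transcendence of $t_{\alpha}$. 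The transcendence-degree condition on $\cF$ is precisely what supplies, for each $k_{\alpha}$, an element $t_{\alpha}\in k_{\alpha}$ transcendental over $k'=\cup\{k_{\beta}\in\cF:k_{\beta}\subsetneq k_{\alpha}\}$. The points needing real care are the irreducibility of the chosen dominating components and of the patched cover, and the descent bookkeeping that extracts the $\ZZ/p\ZZ$-invariant $C_{\alpha}$ over $k'$ from a hypothetical $k'$-model of $W_{\alpha}\to T_{\alpha}$.
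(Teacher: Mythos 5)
Your overall strategy is the same as the paper's: construct a $\Gamma$-cover of the family $T$ over $k[[t]]$ by formal patching (the $G$-cover $W_X\to X$ on one branch of the degenerate fibre, the \'etale $H$-cover $W_Y\to Y$ on the other, glued via the splitting $\Gamma=H\rtimes G$), specialize at a parameter value $t_{\alpha}\in k_{\alpha}$ transcendental over $k'$, prove (2) by the Artin--Schreier invariant (the extensions $Z^p-Z-ax^r$ and $Z^p-Z-bx^r$ of $k(x)$ coincide iff $a/b\in\FF_p$), and prove (3) from the compositum structure $k(T_{\alpha})=k(X)\cdot k(V_{\alpha})$ together with $\Pi\subset P_g(C)$. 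The paper does exactly this, outsourcing the patching to \cite[Proposition 6.4]{kum} and the specialization to \cite[Proposition 6.9]{kum}.

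The genuine gap is the specialization step. Your patched and algebraized cover lives over $\spec(k[[t]])$, and for $t_{\alpha}\ne 0$ the element $t-t_{\alpha}$ is a unit in $k[[t]]$, so ``specialization at $t=t_{\alpha}$'' of that family literally makes no sense; nor is it automatic that a specialization, once defined, remains irreducible with full Galois group $\Gamma$ and the stated ramification. What is needed --- and what \cite[Proposition 6.9]{kum} supplies in the paper --- is a spreading-out (Bertini--Noether) argument: the generic-fibre cover over $k((t))$ is defined over a finitely generated $k[t,t^{-1}]$-subalgebra, and there is a dense open subset $S$ of the spectrum of that algebra over whose points the fibres are irreducible $\Gamma$-covers with the desired ramification and domination properties. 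Consequently $t_{\alpha}$ cannot be an arbitrary element transcendental over $k'$: one must choose a $k_{\alpha}$-point of $S$ that is not a $k'$-point, which exists because $k_{\alpha}$ has infinite transcendence degree over $k'$. You flag ``irreducibility of the patched cover'' as a point needing care, but the mechanism that makes specialization meaningful at all, and preserves irreducibility and the Galois group under it, is absent from your argument. A secondary, minor point: your assertion that $V_{\alpha}\to D$ is connected is not justified, since the poles of $t_{\alpha}^{-r}(x\circ\Theta)^r$ on $D$ have orders $r\cdot e_i$ with $e_i$ the ramification indices of $\Theta$ over $x=\infty$, and these need not be prime to $p$; the paper instead allows the extension $k(Z_{\alpha})/k(D)$ to be trivial or $p$-cyclic and verifies (3) in both cases (your proof of (3) survives this case analysis, since in the trivial case $\Pi\subset\pi_1(C)=\Gal(K^{un}/k(D))$ already suffices).
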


\begin{proof}
By \cite[Proposition 6.4]{kum} there exist an irreducible normal $\Gamma$-cover
$W\to T$ of $k[[t]]$-schemes such that over the generic point $\spec(k((t)))$,
$W^g\to T^g$ is ramified only over the points of $T^g$ lying above $x=\infty$.
This cover can be specialized to $k_{\alpha}$ to obtain (1). 
But we shall apply this argument in a slightly modified setup to obtain 
$W_{\alpha}\to T_{\alpha}$ satisfying both (1) and (2). 

For any positive integer $l$, let $\Gamma^{(l)}=H^l\rtimes G$, where the action
of $G$ on $H^l$ is given by component-wise action of $G$ on each copy of $H$.
Increasing $r$ in the definition of $Y$ to increase the genus of $Y$,
we may assume there exist an \'etale $H^l$-cover $W'_Y\to Y$. 
Since $H^l$ is still a prime-to-$p$ group and $\Gamma^{(l)}= H^l\rtimes G$, 
we can apply \cite[Proposition 6.4]{kum} to obtain an 
irreducible normal $\Gamma^{(l)}$-cover $W'\to T$ of $k[[t]]$-schemes such that 
$W'^g\to T^g$ is ramified only over the points of $T^g$ lying above $x=\infty$.

Now this cover can be specialized to obtain covers of $k_{\alpha}$-schemes 
using \cite[Proposition 6.9]{kum}. In fact in the proof of 
\cite[Proposition 6.9]{kum} it was shown that there exist an open subset 
$S$ of the spectrum of a $k[t,t^{-1}]$ algebra such that the coverings $W'^g\to 
W_{XT}^g\to  T^g\to X\otimes_k k((t))$ descends to a cover of $S$-schemes $W'_S\to 
W_{XT,S}\to T_S\to X\times_k S$. Moreover, the fiber over every closed point in 
$S$ leads to a cover of smooth $k$-curves with the desired ramification properties 
and Galois groups same as that over the generic point $\spec(k((t)))$. 
So the fiber over a $k_{\alpha}$-point of $S$ provides a $\Gamma^{(l)}$-cover
$W'_{\alpha}\to T_{\alpha}$ of $k_{\alpha}$-curves. Moreover, let 
$H_i=H\times\ldots H\times\hat H\times H\times\ldots H$ be the subgroup of 
$\Gamma^{(l)}$ where the $i^{\text{th}}$ factor of $H$ is replaced by the trivial 
group. Then the quotients $W'_{\alpha}/H_i\to T_{\alpha}$ are $\Gamma$-covers
satisfying (1). Let $W^i_{\alpha}=W'_{\alpha}/H_i$. 

Since $H$ is a prime-to-$p$ group, so is any quotient $H'$ of $H$. Hence there are 
only finitely many $H'$-covers of $W_X$ which are \'etale over the preimage of $C$ 
and hence there are only finitely many subcovers of these $H'$-covers of $W_X$.
Also, since $H$ is a finite group, it has only finitely many quotients.
We choose $l$ to be greater than the total number of all the subcovers of 
such $H'$-covers for all the quotients $H'$ of $H$. 
Note that for each $i$, $k(W^i_{\alpha})$ is linearly disjoint with the 
compositum of the remaining $k(W^j_{\alpha})$'s over $k(W_X)k(T_{\alpha})$. 
We claim that at least one of these $W^i_{\alpha}$ satisfy (2) as well. Suppose
not, then for each $i$ there exist an $H'$-cover $W^i_b\to W_X$ \'etale over the 
preimage of $C$ such that
$k(W^i_b)k(T_{\alpha})$ and $k(W^i_{\alpha})$ are not linearly disjoint over 
$k(T_{\alpha})k(W_X)$ where $H'\ne \{e\}$ is a quotient of $H$. 
Since $k(T_{\alpha})k(W_X)/k(W_X)$ is $p$-cyclic extension and $k(W^i_b)/k(W_X)$
is a prime-to-$p$ extension, $k(T_{\alpha})k(W_X)$ and $k(W^i_b)$ are linearly 
disjoint over $k(W_X)$. So $k(W^i_b)$ and $k(W^i_{\alpha})$ are not 
linearly disjoint over $k(W_X)$. Hence $M_i:= k(W^i_b)\cap k(W^i_{\alpha})\supsetneq 
k(W_X)$. Note that $M_i$ defines a nontrivial subcover of the $H'$-cover $W^i_b\to 
W_X$ and $k(W_X)k(T_{\alpha})\subsetneq M_ik(T_{\alpha})\subset k(W^i_{\alpha})$. 
But linear disjointness of $k(W^i_{\alpha})$ with the compositum of the 
remaining $k(W^j_{\alpha})$'s over $k(W_X)k(T_{\alpha})$ tells us that 
$M_i\ne M_j$ for $i\ne j$. So we have produced $l$ distinct covers of $W_X$ such that
each one is a subcover of some $H'$-cover of $W_X$ \'etale over the preimage of $C$ for
some $H'$ a quotient of $H$. This contradicts the choice 
of $l$. We let $W_{\alpha}$ to be the $W^i_{\alpha}$ which satisfies both (1) and (2).
 
Next we will see that the statement (3) can be achieved by choosing a 
specialization over a $k_{\alpha}$-point of $S$ which is not a $k'$-point. 
By shrinking $S$ to an open subset, if necessary, we may assume that the covers
$X_F\to F$, $Y_F\to F$ and $W_{XF}\to X_F$ also 
descend to covers of $S$-schemes such that the fibers over every closed point 
in $S$ are covers of smooth $k$-curves with the same Galois group and 
ramification properties as the fiber over the generic point $\spec(k((t)))$ 
of the original covers. Also observe that $F^g=\PP^1_x\otimes_k k((t))$ so 
the specialization of $X_{F,S}\to F_S$ at any point of $S$ is the morphism 
$\Phi_X:X\to \PP^1_x$ and similarly the specialization of $W_{XF,S}\to X_{F,S}$ 
at any point of $S$ is $W_X\to X$.  

Note that the morphism $W^g\to \PP^1_x\otimes_k k((t))$ of $k((t))$-schemes 
factors through $\lambda:Y_F\otimes_{k[[t]]}k((t))\to \PP^1_x\otimes_k k((t))$. 
The latter covering map is locally given by $z^p-z-(x/t)^r$. 
By Artin-Schreier theory for $p$-cyclic extensions, 
the field extension given by irreducible polynomials $z^p - z - ax^r$ 
and $z^p - z - bx^r$ over $k(x)$ for $a,b \in k$ are equal if and only if 
$a/b \in \FF_p$. Since $k_{\alpha}$ has infinite transcendence degree over
$k'$ there exist $a_{\alpha}\in k_{\alpha}$ transcendental over $k'$ 
such that $t=a_{\alpha}$ defines a point $P_{\alpha}$ in $S$ which is not
a $k'$-point. Then $Y_{F,P_{\alpha}}$, the fiber of $Y_F$ over $P_{\alpha}$, is not 
defined over $k'$. Since $T_{P_{\alpha}}\to Y_{F,P_{\alpha}}$ is a dominant morphism, 
$T_{P_{\alpha}}$ is also not defined over $k'$. Hence $W_X\times_X T_{P_{\alpha}}$ is 
also not defined over $k'$.

Note that $X$ and $W_X$ are defined over $k_0$. So $X$ and $W_X$ can be viewed as a
$k'$-curve, a $k_{\alpha}$-curve or a $k$-curve. We shall do so without making 
base change to various fields to simplify notations. Since $Y_{F,P_{\alpha}} 
\to \PP^1_x$ is a $p$-cyclic cover of $k_{\alpha}$-curves, its pull-backs 
$T_{P_{\alpha}}\to X$ and $W_{XT,P_{\alpha}}\to W_X$ are also $p$-cyclic covers of
$k_{\alpha}$-curves. But (1) tells us $W_{XT}$ and $W_X\times_X T$ are isomorphic 
as covers of $T$. So $W_{XT,P_{\alpha}}\cong W_X\times_X T_{P_{\alpha}}$. Hence
$W_{XT,P_{\alpha}}$ is not defined over $k'$. Moreover, $W_{P_{\alpha}}\to 
W_{XT,P_{\alpha}}$ is an $H$-cover of $k_{\alpha}$-curves and by (2), it is not 
a pull-back of any $H$-cover of $W_X$. Hence by Proposition \ref{pro:fieldofdefn}, 
$W_{P_{\alpha}} \to W_{XT,P_{\alpha}}$ is not defined over $k'$. So by Remark 
\ref{basicproperty} the cover $W_{P_{\alpha}}\to T_{P_{\alpha}}$ is not defined 
over $k'$.

For (4), first we observe that $k(T)$ is a compositum of $L_1=k(X)\otimes_k k((t))$
and $L_2$, where $L_2$ is the function field of a dominating irreducible component of 
$$(Y\times_k \spec(k((t))))\times_{\PP^1_y\times_k\spec(k((t)))} (D\times_k \spec(k((t))))$$
Here the morphism $D\times_k \spec(k((t))) \to \PP^1_y \times_k \spec(k((t)))$
is the composition of $D\times_k \spec(k((t))) \to \PP^1_x \times_k \spec(k((t)))$
with $\PP^1_x \times_k \spec(k((t))) \to \PP^1_y \times_k \spec(k((t)))$ where 
the later morphism is defined in local co-ordinates by sending $y$ to $t/x$.
More explicitly $L_2=k((t))(D)[z]/(z^p-z-(x/t)^r)$. Let $Z$ be the normalization of $D\times_k\spec(k((t)))$ in $L_2$. 

Since $T_{\alpha}$ is a specialization of $T$ to $k$, $k(T_{\alpha})$ is a compositum 
of $k(X)$ and $k(Z_{\alpha})$ where $Z_{\alpha}$ is the corresponding specialization 
of $Z$ to $k$. The morphism $Z \to D\times_k \spec(k((t)))$ factors through
$Y\times_k \spec(k((t)))$. So the genus of $Z$ and hence of $Z_{\alpha}$ is greater 
than the genus of $Y$ which in turn is at least $g$.
Also $Z_{\alpha}\to D$ is unramified over $C$ because $T_{\alpha}$ dominates
$Z_{\alpha}$. The Galois extension $k(Z)/k((t))(D)$ is the compositum of the 
the extension $k((t))(Y)/k((t))(t/x)$ with $k((t))(D)$.
Moreover $k((t))(Y)/k((t))(t/x)$ is $p$-cyclic, so the extension $k(Z)/k((t))(D)$ is either trivial or $p$-cyclic. So the
same is true for $k(Z_{\alpha})/k(D)$. 
If $k(Z_{\alpha})/k(D)$ is trivial then $\Gal(K^{un}/k(Z_{\alpha}))=\pi_1(C) \supset \Pi$
by assumption. If $k(Z_{\alpha})/k(D)$ is a $p$-cyclic extension then $\Gal(K^{un}/k(Z_{\alpha})) 
\supset P_g(C)$ as genus of $Z_{\alpha}$ is greater than $g$.
So either way $\Pi \subset \Gal(K^{un}/k(Z_{\alpha}))$. Also $\Pi$ is clearly
contained in $\pi_1^L=\Gal(K^{un}/k(X))$ (see Figure \ref{toweroffields}), 
hence $\Pi \subset \Gal(K^{un}/k(X)k(Z_{\alpha}))=\Gal(K^{un}/k(T_{\alpha}))$.
\end{proof}

\begin{thm}\label{main}
  Suppose $\Pi\subset P_g(C)$ for some $g\ge 0$.
  Under the above notation
  the following finite split embedding problem has $\card(k)=m$ proper solutions
  $$
  \xymatrix{
    &          &               &\Pi \ar[d] \ar @{-->} [dl]\\
    1\ar[r] & H \ar[r] & \Gamma \ar[r] & G \ar[r]\ar[d] & 1\\
    & & & 1 }
  $$
  Here $H$ is a prime-to-$p$ group and a minimal normal subgroup of $\Gamma$.
\end{thm}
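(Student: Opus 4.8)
The plan is to convert the split embedding problem into the geometric problem of producing many $\Gamma$-Galois covers, and then to read off solutions by restricting the associated Galois representations to $\Pi$.

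First I would fix the surjection $\phi\colon \Pi \onto G$ defining the embedding problem and translate it, as in the discussion preceding Proposition \ref{many-covers}, into the tower $k(C)\subset L \subset K^b$ together with the $G$-cover $W_X \to X$ (with $k(X)=L$, $k(W_X)=L'$) whose base change $M=K^bL'$ realizes $\phi$. For each $k_\alpha \in \cF$ I would invoke Proposition \ref{many-covers} to obtain an irreducible $\Gamma$-cover $W_\alpha \to T_\alpha$ dominating $W_X$, with $T_\alpha \to D$ Galois and unramified over $C$ and $k(T_\alpha)\subset K^b$. Because $W_\alpha \to T_\alpha \to D$ is unramified over $C$, we have $k(W_\alpha)\subset K^{un}$, so the cover yields a surjection $\rho_\alpha\colon \Gal(K^{un}/k(T_\alpha)) \onto \Gamma$. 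Since $k(T_\alpha)\subset K^b$ we have $\Pi = \Gal(K^{un}/K^b) \subset \Gal(K^{un}/k(T_\alpha))$, and I set $\psi_\alpha := \rho_\alpha|_\Pi\colon \Pi \to \Gamma$.

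Next I would check that each $\psi_\alpha$ is a proper solution. For compatibility, note that the $G$-quotient cover $W_\alpha/H \to T_\alpha$ is the pullback of $W_X \to X$ along $T_\alpha \to X$ (here $T_\alpha$ dominates $X$ by the construction in Proposition \ref{many-covers}), so $k(W_\alpha/H)=L'\cdot k(T_\alpha)$ and hence $K^b\cdot k(W_\alpha/H)=K^bL'=M$ with matching $G$-structure; this is exactly the statement $\alpha\circ\psi_\alpha=\phi$. For surjectivity I would use the Frattini hypothesis: the image $\psi_\alpha(\Pi)$ satisfies $\alpha(\psi_\alpha(\Pi))=\phi(\Pi)=G$, so $\psi_\alpha(\Pi)\,H=\Gamma$; since $H\subset M(\Gamma)$, every maximal subgroup of $\Gamma$ contains $H$, so if $\psi_\alpha(\Pi)$ were proper it would lie in such a maximal subgroup $\mathfrak{m}$, whence $\Gamma=\psi_\alpha(\Pi)H\subseteq \mathfrak{m}$, a contradiction. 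Thus $\psi_\alpha$ is a proper solution for every $\alpha$.

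Finally, and this is where I expect the main obstacle to be, I would prove that distinct $\alpha$ give distinct solutions, so that the $m=\card(\cF)$ indices produce $m$ solutions. Given $k_\beta \subsetneq k_\alpha$, I would set $\Theta=\Gal(K^{un}/E)$ with $E=k(T_\alpha)\cdot k(T_\beta)\subset K^b$; then $\Pi \trianglelefteq \Theta$, both $\rho_\alpha,\rho_\beta$ restrict to surjections $\Theta \onto \Gamma$, and since both $G$-quotients become $L'E$ over $E$ they agree after composing with $\alpha$. Lemma \ref{restriction-lemma} then applies: if $\rho_\alpha|_\Theta \ne \rho_\beta|_\Theta$, their restrictions to $\Pi$, namely $\psi_\alpha$ and $\psi_\beta$, are distinct. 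So it remains to rule out $\rho_\alpha|_\Theta=\rho_\beta|_\Theta$; equality would mean $k(W_\alpha)E=k(W_\beta)E$ as $\Gamma$-extensions of $E$, and since $W_\beta \to T_\beta$ is already defined over $k_\beta \subset k'=\cup\{k_\gamma \in \cF : k_\gamma \subsetneq k_\alpha\}$, a descent argument would then force $W_\alpha \to T_\alpha$ to be defined over $k'$, contradicting property (2) of Proposition \ref{many-covers}. I expect the delicate point to be precisely this descent step --- turning the profinite equality $\rho_\alpha|_\Theta=\rho_\beta|_\Theta$ into a field-of-definition statement about the cover --- whereas the compatibility and surjectivity are routine once Proposition \ref{many-covers} and Lemma \ref{restriction-lemma} are in hand.
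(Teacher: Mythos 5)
Your overall geometric strategy---produce covers via Proposition \ref{many-covers}, restrict the resulting Galois representations to $\Pi$, and separate solutions using property (2) of that proposition---is the same as the paper's, but both of your group-theoretic steps lean on a hypothesis that Theorem \ref{main} does not have, and this is a genuine gap. Your surjectivity argument assumes $H\subset M(\Gamma)$; in Theorem \ref{main} the kernel $H$ is only assumed to be a prime-to-$p$ minimal normal subgroup of $\Gamma$, and the Frattini-type condition appears only in Lemma \ref{restriction-lemma} and Theorem \ref{quasi-p}. Worse, under the literal reading you use (``every maximal subgroup of $\Gamma$ contains $H$''), the condition is vacuous for a \emph{split} extension: if $\Gamma=H\rtimes G$ and $H$ lies in every maximal subgroup, then $G$ together with non-generators generates $\Gamma$, forcing $G=\Gamma$ and $H=1$; the paper's arguments are only consistent if $M(\Gamma)$ is read as the intersection of maximal \emph{normal} subgroups, as in Ribes--Zalesskii. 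The substantive issue your shortcut hides is that $\Gal(k(W_\alpha)K^b/K^b)$ can genuinely shrink: even though $\Gal(k(W_\alpha)/k(T_\alpha))=\Gamma$, part of the cover can ``fall into'' $K^b$ after base change, and precisely because the extension splits there exist proper subgroups $P\le\Gamma$ with $PH=\Gamma$, so nothing formal excludes $\psi_\alpha(\Pi)$ being such a $P$. The paper rules this out using the prime-to-$p$ hypothesis, which your proposal never invokes at this point: since $H$ is prime to $p$, there are only finitely many $H$-covers of $W_X$ \'etale over the preimage of $C$, so $\Gal(k(W_\alpha)K^b/k(W_X)K^b)=H$ (equivalently, the linear disjointness \eqref{linear-disjoint}) can fail for only finitely many $\alpha\in\cF$, and those $\alpha$ are discarded (harmless, as $m$ is infinite). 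Without some such argument your claim that \emph{every} $\psi_\alpha$ is a proper solution is unjustified.

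The same unavailable hypothesis invalidates your appeal to Lemma \ref{restriction-lemma} in the distinctness step: that lemma explicitly requires $H\subset M(\Gamma)$ and is used in the paper only for Theorem \ref{quasi-p}; moreover you do not verify that $\rho_\alpha$ and $\rho_\beta$ remain surjective after restriction to $\Theta=\Gal(K^{un}/E)$, which is again the collapse problem. Finally, the descent step you flag as delicate is exactly where the paper proceeds in the opposite order: it first proves $k_\alpha(W_\alpha)L_o\ne k_\beta(W_\beta)L_o$ over the compositum $L_o$ of the \emph{small} function fields $k_\alpha(T_\alpha)$ and $k_\beta(T_\beta)$, where equality immediately exhibits the one cover as a base change of the other, i.e.\ as defined over the smaller field, contradicting Proposition \ref{many-covers}(2); it then upgrades mere distinctness to linear disjointness of $k(W_\alpha)L_o$ and $k(W_\beta)L_o$ over $k(W_X)L_o$ using only the minimality of $H$ (a nontrivial intersection would give a proper nontrivial normal subgroup of $\Gamma$ inside $H$), and combines this with \eqref{linear-disjoint} to conclude $k(W_\alpha)K^b\ne k(W_\beta)K^b$. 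So the three ingredients your proposal is missing---prime-to-$p$ finiteness for surjectivity, minimality (not a Frattini condition) for disjointness, and descent carried out over the small fields before base changing to $k$ and $K^b$---are precisely what the paper uses in place of the Frattini shortcut, which is simply not available in this theorem.
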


\begin{proof}
First we note that, translating the problem to Galois theory using Figure 
\ref{toweroffields}, our objective is to find $m$ distinct $\Gamma$-extensions 
of $K^b$ contained in $K^{un}$ and containing $M$ (or equivalently $k(W_X)$).
By proposition \ref{many-covers}, for each $k_{\alpha} \in \cF$, there exist 
$W_{\alpha}$ and $T_{\alpha}$ such that 
$\Gal(k_{\alpha}(W_{\alpha})/k_{\alpha}(T_{\alpha}))$ is $\Gamma$. 
Moreover, since $k_{\alpha}$ and $k$ are algebraically closed field, 
$\Gal(k(W_{\alpha})/k(T_{\alpha}))$ is also $\Gamma$. Also $k(W_X)$ 
and $K^b$ are linearly disjoint over $k(X)$ and $k(T_{\alpha})\subset K^{b}$.
So $\Gal(k(W_X)k(T_{\alpha})/k(T_{\alpha}))=\Gal(k(W_X)/k(X))=G$. 

Proposition \ref{many-covers}(2) implies that $K^bk(W_X)$ and $k(W_{\alpha})$ 
are linearly disjoint over $k(W_X)k(T_{\alpha})$. 
So $\Gal(K^{b}k(W_{\alpha})/K^{b}k(W_X))= 
\Gal(k(W_{\alpha})/k(T_{\alpha})k(W_X))=H$. Also $\Gal(K^bk(W_X)/K^b)=G$, so 
comparing degrees, we get that $k(W_{\alpha})$ and $K^b$ are linearly disjoint over $k(T_{\alpha})$ and $\Gal(K^bk(W_{\alpha})/K^b)=\Gamma$. 
To complete the proof, it is enough to show that for 
$k_{\alpha} \subsetneq k_{\beta} \in \cF$, 
$k(W_{\alpha})K^{b} \ne k(W_{\beta})K^{b}$ as subfields of $K^{un}$. 
Let $L_o=k_{\alpha}(T_{\alpha}) k_{\beta}(T_{\beta})$. 
Note that $L_o\subset K^{b}$. 

We will first show that 
$k_{\alpha}(W_{\alpha})L_o \ne k_{\beta}(W_{\beta})L_o$. 
Suppose equality holds, then $k_{\beta}(W_{\beta})k_{\alpha}(T_{\alpha})=k_{\alpha}
(W_{\alpha})k_{\beta}(T_{\beta})$. So the extension $k_{\beta}(W_{\beta})k_{\alpha}
(T_{\alpha})/k_{\beta}(T_{\beta})k_{\alpha}(T_{\alpha})$ is the compositum of the 
extension $k_{\alpha}(W_{\alpha})/k_{\alpha}(T_{\alpha})$ with $k_{\beta}(T_{\beta})$.
Hence the $\Gamma$-cover from the normalization of $W_{\beta}\times_X T_{\alpha}$ to the 
normalization of $T_{\beta}\times_X T_{\alpha}$ is defined over $k_{\alpha}$ by 
Lemma \ref{lm:fieldofdefn_extn}.
Also $W_{\beta}\to T_{\beta}$ is a $\Gamma$-cover, $T_{\alpha}\to X$ can be viewed 
as a cover of $k_{\alpha}$-curves and $T_{\beta}\to X$ can be viewed as a cover 
of $k_{\beta}$-curves. So Lemma \ref{lm:descend} 
can be used to conclude that the morphism $W_{\beta} \to T_{\beta}$ is defined over $k_{\alpha}$. This contradicts Proposition \ref{many-covers}(3).

Since $k$ and $k_{\beta}$ are algebraically closed fields, $k_{\beta}(W_{\alpha})L_o \ne 
k_{\beta}(W_{\beta})L_o$ implies that
$k(W_{\alpha})L_o \ne k(W_{\beta})L_o$. Moreover,
both $k(W_{\alpha})L_o$ and $k(W_{\beta})L_o$ are $\Gamma$-extensions of $kL_o$ because $kL_o\subset K^b$
and, $K^b$ and $k(W_{\alpha})$ (respectively $k(W_{\beta})$) are linearly disjoint over $k(T_{\alpha})$ (respectively $k(T_{\beta})$).

Next we shall see that $k(W_{\alpha})L_o$ and $k(W_{\beta})L_o$ are linearly disjoint 
over $k(W_X)L_o$. Assume the contrary and let 
$M'=k(W_{\alpha})L_o\cap k(W_{\beta})L_o$. Then $M'$ is a subfield of 
$k(W_{\alpha})L_o$ which contains
$k(W_X)L_o$ properly. Also being an intersection of two Galois extensions of 
$kL_o$, $M'$ is also a Galois extension of $kL_o$. So the
Galois group $\Gal(k(W_{\alpha})L_o/M')$ is a normal subgroup of 
$\Gamma=\Gal(k(W_{\alpha})L_o/kL_o)$ and a proper
subgroup of $H=\Gal(k(W_{\alpha})L_o/k(W_X)L_o)$ contradicting that 
$H$ is a minimal normal subgroup of $\Gamma$.

Finally, it will be shown that the assumption $k(W_{\alpha})K^b=k(W_{\beta})K^b$ 
leads to a contradiction. Let $L_1=k(W_{\alpha})k(W_{\beta})\cap K^b$. 
Since $k(W_{\alpha})K^b/K^b$ is a $\Gamma$-extension and $K^b$, 
$k(W_{\alpha})k(W_{\beta})$ are linearly disjoint over $L_1$,
the extension degree $[k(W_{\alpha})k(W_{\beta}):L_1]=|G||H|$.  
Also note that $[k(W_{\alpha})k(W_{\beta}):kL_o]=|G||H|^2$ because 
$[k(W_X)L_o:kL_o]=|G|$ 
and, $k(W_{\alpha})L_o$ and $k(W_{\beta})L_o$ are linearly disjoint 
$H$-extensions of $k(W_X)L_o$.
We have the following tower of fields where the labels of the arrow 
denote the degree:

\begin{equation*}
  \xymatrix{ & k(W_{\alpha})K^b=k(W_{\beta})K^b  \\
    K^b\ar[ur]^{|G||H|} &  & k(W_{\alpha})k(W_{\beta}) \ar[ul]\\
    &  L_1\ar[ru]_{|G||H|}\ar[lu]\\
    &  kL_o=k(T_{\alpha})k(T_{\beta}) \ar[u]^{|H|}\ar@/_2pc/[uur]_{|G||H|^2}\\
    &  k(X) \ar[u]^{p^2}
  }
\end{equation*}

From the above figure we also conclude that $[L_1:kL_o]=|H|$. Also observe 
that $[kL_0:k(T_{\alpha})]=p$.
Since $L_1\subset K^b$, $L_1$ and $k(W_{\alpha})$ are linearly disjoint 
over $k(T_{\alpha})$. Also $[L_1:k(T_{\alpha})]=p|H|$ because 
$[kL_o:k(T_{\alpha})]=p$. So we have $[k(W_{\alpha})L_1:k(T_{\alpha})]=p|G||H|^2$. 
Moreover $[k(W_{\alpha})k(W_{\beta}):k(T_{\alpha})]=p|G||H|^2$. So by 
comparing degrees we find that the inclusion $k(W_{\alpha})L_1 \subset 
k(W_{\alpha})k(W_{\beta})$ is in fact an equality $k(W_{\alpha})L_1=k(W_{\alpha})k(W_{\beta})$.

Note that $L_1\subset K^b$ and $K^b/k(X)$ is an abelian extension, so $L_1/k(X)$ is also abelian. 
Since $k(T_{\alpha})k(T_{\beta})/k(X)$ is $(\ZZ/p\ZZ)^2$-extension and $H$ is a 
prime-to-$p$ group, there exist an $H$-Galois extension $L_2/k(X)$ 
such that $L_1=L_2L_o$ and $L_2$, $kL_o$ are linearly disjoint over $k(X)$.

Using Proposition \ref{fieldofdefn_primetop} we see that every 
prime-to-$p$ cover of $X$, \'etale over the inverse image of $C$ is defined
over $k_0$. Hence the extension $L_2/k(X)$ is defined over $k_0\subset k_{\alpha}$. 
So by the Lemma \ref{lm:compositum}, the extension $L_2k(W_{\alpha})/k(T_{\alpha})$ 
is defined over $k_{\alpha}$. But 
$$L_2k(W_{\alpha})k(T_{\beta})=L_2L_0k(W_{\alpha})=L_1k(W_{\alpha})=k(W_{\alpha})k(W_{\beta})$$ 
So by taking compositum with $k(T_{\beta})$ and using Lemma \ref{lm:fieldofdefn_extn}, 
we see that the extension $k(W_{\alpha})k(W_{\beta})/k(T_{\alpha})k(T_{\beta})$ is defined over 
$k_{\alpha}$. Hence by Remark \ref{basicproperty}, the sub-extension
$k(T_{\alpha})k(W_{\beta})/k(T_{\alpha})k(T_{\beta})$ is defined over $k_{\alpha}$.
Also $T_{\alpha}\to X$ is a morphism of $k_{\alpha}$-curves and both 
$W_{\beta}\times_X T_{\alpha}\to T_{\beta}\times_X T_{\alpha}$ and 
$W_{\beta}\to T_{\beta}$ are $\Gamma$-extensions.  So by Lemma \ref{lm:descend}, 
$W_{\beta}\to T_{\beta}$ is defined over $k_{\alpha}$. This again contradicts 
(3) of Proposition \ref{many-covers}.
\end{proof}

\begin{rmk}\label{highgenus}
Note that the assumption $\Pi\subset P_g(C)$ for some $g$ means that the \'etale
pro-cover of $C$ corresponding to the field $K^b$ dominates all $p$-cyclic covers of $C$ of
genus at least $g$. If we relax the above assumption on $\Pi$ by asking that the 
pro-cover corresponding to $K^b$ dominates all 
but a ``small set'' of $p$-cyclic covers of $C$ of genus at least $g$ then also 
Theorem \ref{main} holds with slight modifications in the proof. Here a ``small set''
means a set of cardinality strictly less than $m$.
\end{rmk}

\subsection{Quasi-p group}
Now the embedding problem with quasi-$p$ kernel contained in the $M(\Gamma)$
will be shown to have $m$ distinct solutions.
\begin{pro}\label{p-rankokay}
 Let $\Pi$ be a closed normal subgroup of $\pi_1(C)$ of rank $m$ such that 
 $\pi_1(C)/\Pi$ is abelian then $R_S(\Pi)=m$ for all finite $p$-group $S$.
\end{pro}

\begin{proof}
We observe that the pro-$p$ quotient of $\pi_1(C)$ is isomorphic to the pro-$p$ 
free group of rank $m$ by \cite[Theorem 5.3.4]{ha3}, Lemma \ref{FJ-lemma}
and \cite[Theorem 8.5.2]{RZ}. So the pro-$p$ quotient of $\Pi$ is 
also pro-$p$ free of rank $m$ by \cite[Corollary 8.9.3]{RZ}. 
Hence $R_S(\Pi)=m$ for every finite $p$-group $S$. 
\end{proof}

\begin{thm}\label{quasi-p}
  Suppose $\Pi$ is closed normal subgroup of $\pi_1(C)$ of rank $m$ such that 
  $\pi_1(C)/\Pi$ is abelian.
  Then the following finite embedding problem has $\card(k)=m$ proper solutions
  \begin{equation}\label{EP-quasi}
  \xymatrix{
     &         &               &\Pi \ar[d] \ar @{-->} [dl]\\
    1\ar[r] & H \ar[r] & \Gamma \ar [r] & G \ar[r]\ar[d] & 1\\
      &  & & 1  }
  \end{equation}
  Here $H$ is a quasi-$p$ group, a minimal normal subgroup of $\Gamma$ and it is 
  contained in $M(\Gamma)$. 
\end{thm}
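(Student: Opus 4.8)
The plan is to mirror the proof of Theorem \ref{main}, changing only the construction of the $H$-part of the cover and leaving the distinctness argument essentially untouched. Concretely, I would first establish the exact analogue of Proposition \ref{many-covers} for a quasi-$p$ kernel $H$: for each $k_{\alpha}\in\cF$ there is an irreducible $\Gamma$-cover $W_{\alpha}\to T_{\alpha}$ satisfying properties (1)--(3) of that proposition. As before, the formal patching of \cite[Proposition 6.4]{kum} produces an irreducible normal $\Gamma$-cover $W\to T$ of $k[[t]]$-schemes whose generic fibre is ramified only over $x=\infty$, and \cite[Proposition 6.9]{kum} specialises it to $k$-schemes over an open subset $S$ of the spectrum of a $k[t,t^{-1}]$-algebra. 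The one input that must be replaced is the realisation of $H$. Since $H=p(H)$, there is no nontrivial \'etale $H$-cover of the complete auxiliary curve $Y$ to exploit, so in place of the \'etale cover $\Psi_Y\colon W_Y\to Y$ used in the prime-to-$p$ case I would take a \emph{wildly ramified} $H$-Galois cover $W_Y\to Y$ branched only at a point of $Y$ lying over $y=0$ (equivalently, via $t=xy$, over the locus $x=\infty$). Such a cover exists by Abhyankar's conjecture (Raynaud \cite{Ra1} for the line, Harbater \cite{Ha2} in general), whose hypothesis $H/p(H)=1$ is automatic here. The auxiliary $\ZZ/p\ZZ$-cover $Y$ itself is retained, since it is needed both for the genus control and for the variation over $\cF$. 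Patching this wild $H$-cover against the given $G$-cover $W_X\to X$ through the relation $t=xy$ then yields the desired $\Gamma$-cover, still unramified over $C$.

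The verification of properties (2) and (3) goes through as in Proposition \ref{many-covers}. For (3), $T_{\alpha}$ differs from $X$ only by a $p$-cyclic piece $Z_{\alpha}\to D$ whose genus exceeds $g$, so $k(Z_{\alpha})/k(D)$ is either trivial or $p$-cyclic of large genus; in either case $\Pi\subset P_g(C)\subset\Gal(K^{un}/k(Z_{\alpha}))$, and combined with $\Pi\subset\pi_1^L=\Gal(K^{un}/k(X))$ this gives $k(T_{\alpha})\subset K^b$. For (2) I would again exploit the infinite transcendence degree of $k_{\alpha}$ over $k'=\cup\{k_{\beta}\in\cF:k_{\beta}\subsetneq k_{\alpha}\}$: choosing a specialising value $t=a_{\alpha}$ transcendental over $k'$ defines a point $P_{\alpha}\in S$ which is not a $k'$-point, forcing $W_{\alpha}\to T_{\alpha}$ not to be defined over $k'$.

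With this analogue of Proposition \ref{many-covers} in hand, the distinctness argument of Theorem \ref{main} applies verbatim. For $k_{\alpha}\subsetneq k_{\beta}$ one sets $L_o=k_{\alpha}(T_{\alpha})k_{\beta}(T_{\beta})\subset K^b$ and shows $k(W_{\alpha})L_o\ne k(W_{\beta})L_o$, since otherwise $W_{\beta}\to T_{\beta}$ would descend to $k_{\alpha}$, contradicting property (2). Minimality of $H$ as a normal subgroup of $\Gamma$ then upgrades this inequality to linear disjointness over $k(W_X)L_o$, and the disjointness from $K^b k(W_X)$ recorded in \eqref{linear-disjoint} yields $k(W_{\alpha})K^b\ne k(W_{\beta})K^b$. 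This produces $\card(k)=m$ distinct proper solutions of \eqref{EP-quasi}, as required. (Together with Theorem \ref{main} this settles every minimal normal kernel, since a minimal normal subgroup is of the form $S^n$ with $S$ simple, and is prime-to-$p$ or quasi-$p$ according as $p\nmid|S|$ or $p\mid|S|$.)

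I expect the first step to be the main obstacle. In the prime-to-$p$ case the variation over $\cF$ was transparent because the wild cover carried a single visible Artin--Schreier parameter $a_{\alpha}$ in $Z^p-Z-a_{\alpha}x^r$; for a quasi-$p$ group produced by patching there is no such explicit parameter, so one must check that the construction of \cite[Proposition 6.4]{kum} genuinely furnishes a family over $S$ whose $k_{\alpha}$-points avoid being $k'$-points, i.e.\ that the wild $H$-cover really moves in a $\card(k)$-dimensional family rather than collapsing onto finitely many isomorphism classes over small subfields. This is precisely where the uncountability of $k$ and the infinite-transcendence-degree property defining $\cF$ are indispensable.
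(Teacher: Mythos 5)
There is a genuine gap, and it sits exactly where you declare that the distinctness argument of Theorem \ref{main} ``applies verbatim.'' In the proof of Theorem \ref{main}, the key relation \eqref{linear-disjoint} --- that $k(W_{\alpha})k(T_{\alpha})$ and $K^{b}k(W_X)$ are linearly disjoint over $k(W_X)k(T_{\alpha})$ for all but finitely many $\alpha$ --- is deduced from the fact that, $H$ being prime-to-$p$, there are only \emph{finitely many} $H$-covers of $W_X$ \'etale over the preimage of $C$ (finite generation of the prime-to-$p$ fundamental group of an affine curve). For a quasi-$p$ group $H$ this finiteness is false: there are $\card(k)$ such covers --- that is precisely the content of Abhyankar's conjecture and of Pop's theorem --- so the ``all but finitely many $\alpha$'' argument has no analogue. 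The failure is not cosmetic. $K^b$ is the fixed field of an \emph{arbitrary} closed normal subgroup $\Pi\subset P_g(C)$, and by the very definition of $P_g(C)$ it contains the function fields of all \'etale $p$-cyclic covers of $C$ of genus $\ge g$, an enormous and uncontrolled supply of quasi-$p$ extensions. Nothing in your construction prevents the $H$-part of $k(W_{\alpha})$ from being swallowed by $K^{b}k(W_X)$ after base change; if that happens, the restriction to $\Pi$ of the surjection $\Gal(K^{un}/k(T_{\alpha}))\onto\Gamma$ is no longer surjective (properness fails), let alone distinct for distinct $\alpha$. So both properness and distinctness of your solutions remain unproved. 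A telling symptom is that your argument never uses the hypothesis $H\subset M(\Gamma)$, which appears in the statement of Theorem \ref{quasi-p} precisely because the proof needs it at this point. (Incidentally, the step you single out as the main obstacle --- making the covers move over $\cF$ --- is the less problematic one, since the specialization parameter $t=a_{\alpha}$ lives in the retained Artin--Schreier curve $Y$, exactly as in the prime-to-$p$ case.)

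The paper's own proof takes a different and much shorter route designed to avoid this issue: it invokes Pop's theorem (\cite{pop}, \cite[Theorem 5.3.4]{ha3}) to produce $m$ proper solutions of the embedding problem for the \emph{full} fundamental group $\pi_1(X^0)$ of the affine curve $X^0$ lying over $C$ (where properness is not an issue because one is not restricting to a subgroup), and then applies Lemma \ref{restriction-lemma} --- whose proof is where minimality of $H$ and the Frattini condition $H\subset M(\Gamma)$ do all the work --- to conclude that restrictions to the closed normal subgroup $\Pi$ remain surjective and that distinct solutions restrict to distinct solutions. Any repair of your patching approach would need a group-theoretic substitute of this kind for \eqref{linear-disjoint}: for instance, after constructing $W_{\alpha}\to T_{\alpha}$, use normality of $\Pi$ in $\Gal(K^{un}/k(T_{\alpha}))$ together with $H\subset M(\Gamma)$ to force surjectivity of the restricted map, and an argument \`a la Lemma \ref{restriction-lemma} over $\Gal(K^{un}/k(T_{\alpha})k(T_{\beta}))$ for distinctness. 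There is also a secondary, fixable-but-not-by-citation issue: \cite[Propositions 6.4 and 6.9]{kum} are formulated for the configuration in which the $H$-cover of $Y$ is \'etale, whereas your wild $H$-cover is branched at the unique point of $Y$ over $y=0$, i.e., exactly along the locus where the formal gluing takes place, so those propositions would have to be reworked rather than quoted.
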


\begin{proof}
Let us recall Figure \ref{toweroffields} and the setup just 
before subsection \ref{sec:prime-to-p}. 
In the notation of Figure \ref{toweroffields},
let $X^0$ be the normalization of $C$ in $L$, 
i.e., $X^0$ is the open subset of $X$ lying above $C$. 
Hence $\pi_1(X^0)$ contains $\Pi$ and the surjection
$\Pi\to G$ extends to $\pi_1(X^0)$. Also note that $\pi_1(X^0)$ is a 
subgroup of $\pi_1(C)$.
By F. Pop's result (\cite{pop},\cite[Theorem 5.3.4]{ha3}), 
the following embedding problem has $m$ distinct solutions  
  $$
  \xymatrix{
             &    &           &\pi_1(X^0) \ar[d] \ar @{-->} [dl]\\
    1\ar[r] &H \ar[r] & \Gamma \ar[r] & G\ar[r]\ar[d] &1 \\
            & & & 1}
  $$

Let $I$ be an indexing set of cardinality $m$ and $\tilde{\theta}_i, i\in I$ denote 
the $m$ distinct solutions to the above embedding problem. 
Let $\theta_i$ be the restriction of $\tilde{\theta}_i$ to the normal subgroup $\Pi$.
By Lemma \ref{restriction-lemma} we know that every 
$\theta_i$ is a solution to the embedding problem \eqref{EP-quasi}.

Now we shall show that the embedding problem \eqref{EP-quasi} has $m$ 
distinct solutions.
For each solution $\tilde \theta_i$ of the above embedding problem for 
$\pi_1(X^0)$, let $L_i$ be the fixed subfield of $K^{un}$ by the group 
$\ker{\tilde \theta_i}$. Note that $\Gal(L_i/L)=\Gamma$. Moreover, since $\theta_i$ 
is a solution to the embedding problem \eqref{EP-quasi}, we have 
$\Gal(L_iK^b/K^b)=\Gamma$. Let us summarize the situation in the following
diagram.

\begin{equation*}
  \xymatrix{K & & & L_iK^b \ar[lll]\\
    &  M\ar@{=}[r]\ar[ul] & L'K^b \ar[ull]\ar[ur]_H\\
    &  K^b\ar[uul]^{\Pi}\ar[u] \ar[ru]_G & & L_i\ar[uu]\\
    &  & L'\ar[ru]_H\ar[uu]\\
    &  L \ar@/^2pc/[uuuul]^{\Theta} \ar[uu] \ar[ru]_G \ar@/_2pc/[uurr]_{\Gamma}
  }
\end{equation*}

Note that $L_i$ and $L'K^b$ are linearly disjoint over $L'$ for all $i\in I$.
Let $i$  and $j$ be two distinct elements of $I$. 
Since $L_i$ and $L_j$ are Galois extensions of $L$, so is $L_i\cap L_j$. Also 
$L'\subset L_i\cap L_j$, so the Galois group $\Gal(L_i/L_i\cap L_j)$
is a normal subgroup of $\Gamma$ and is contained in $H=\Gal(L_i/L')$. 
But $H$ is a minimal normal subgroup of
$\Gamma$, so either $L_i=L_i\cap L_j$ or $L_i\cap L_j=L'$. 
Since $\tilde \theta_i$ and $\tilde \theta_j$ are surjections 
with different kernels, $L_i\ne L_j$. So we must have $L_i\cap L_j=L'$. In 
particular, $\Gal(L_iL_j/L')=H\times H$ and $\Gal(L_iL_j/L)=\Gamma\times_G\Gamma 
=\{(\gamma_1,\gamma_2)| \gamma_1,\gamma_2 \in \Gamma \text{ and }\bar \gamma_1=\bar \gamma_2 \in G\}$. 
Since $H$ is a minimal normal subgroup of 
$\Gamma$, $H=\mathbb{S}\times \mathbb{S}\times\ldots \mathbb{S}$ for some
simple group $\mathbb{S}$. 

If $\mathbb{S}$ is not abelian then $H$ and
hence $H\times H$ is perfect (i.e. it has no non-trivial quotient group
that is abelian). But $L'K^b/L'$ is an abelian extension since it is
a base change of the abelian extension $K^b/L$. Hence $L'K^b$ and
$L_iL_j$ are linearly disjoint over $L'$. Therefore 
$[L_iL_jK^b/K^b]=|G||H|^2$. In particular, $L_iK^b\ne L_jK^b$ and hence
$\theta_i$ for $i\in I$ are all distinct.

Now suppose $\mathbb{S}$ is abelian. Since $H$ is a quasi-$p$ group 
$S=\ZZ/p\ZZ$. Hence $H$ is a $p$-group.
If $L_iK^b\ne L_jK^b$ for every $i, j \in I$, $i\ne j$ then clearly $\theta_i$
for $i\in I$ are all distinct solutions to the embedding problem 
\eqref{EP-quasi} and we are done again.

So suppose for some $i,j\in I$, $i\ne j$, $L_iK^b=L_jK^b$. Let 
$L_1=K^b\cap L_iL_j$.
Since $L_1$ is an intersection of Galois extensions of $L$, $L_1/L$ is a
Galois extension. And we have the following tower of fields.

\begin{equation*}
  \xymatrix{ & L_iK^b=L_jK^b  \\
    K^b\ar[ur]^{|G||H|} &  & L_iL_j \ar[ul]\\
    &  L_1\ar[ru]_{|G||H|}\ar[lu]\\
    &  L \ar[u]^{|H|}\ar@/_2pc/[uur]_{|G||H|^2}
  }
\end{equation*}

The labels above denote the degree of the extensions. Note that $K^b$ and 
$L_iL_j$ are linearly disjoint over $L_1$ so 
$[L_iL_j:L_1]=[L_iK^b:K^b]=|\Gamma|=|G||H|$. Also 
$[L_iL_j: L]=|\Gamma\times_G\Gamma|=|G||H|^2$. Hence $[L_1:L]=|H|$. We have
already observed that $L_i$ and $K^b$ are linearly disjoint over $L$ and 
$L_1\subset K^b$, hence $L_i$ and $L_1$ are linearly disjoint over $L$. So
$[L_iL_1:L]=|G||H|^2=[L_iL_j:L]$. Hence the inclusion $L_iL_1 \subset L_iL_j$
is in fact equality. Linear disjointness
of $L_i$ and $L_1$ also tells us that $\Gal(L_iL_1/L)=\Gamma\oplus H_1$ 
where $H_1=\Gal(L_1/L)$. So we have $\Gamma\oplus H_1\cong \Gamma\times_G\Gamma$.
Also note that $|H_1|=|H|$ hence $H_1$ is a $p$-group.

Fix an $i\in I$ and $l$ a positive integer. Let $I'$ be another 
indexing set of cardinality $m$. Using Propostion \ref{p-rankokay} we have 
$R_{H_1^l}(\Pi)=m$. So there are $m$ distinct Galois extensions $M'_{\alpha}$ of 
$K^b$ indexed by $\alpha \in I'$ such that $\Gal(M'_{\alpha}/K^b)=H_1^l$ and $M'_{\alpha}\subset K^{un}$. Since $L_iK^b/K^b$ is finite extension, we can have only finitely many intermediate fields extensions. We choose $l$ to be greater than this
number. Taking fixed subfield of $M'_{\alpha}$ by various copies of $H_1^{l-1}$, 
we get $l$ distinct $H_1$ extensions of $K^b$ such that any one of them is 
linearly disjoint with the compositum of the remaining ones over $K^b$. By choice of 
$l$, one of these extensions must be linearly disjoint with $L_iK^b$ over $K^b$. We will
denote this field by $M_{\alpha}$. 
So for each $\alpha \in I'$ there exist $M_{\alpha}\subset M'_{\alpha}$ such 
that $L_iK^b$ and $M_{\alpha}$ are linearly disjoint over $K^b$ and $\Gal(M_{\alpha}/K^b)=H_1$.
Using the linear disjointness, we see that the Galois group 
$\Gal(M_{\alpha}L_i/K^b)=\Gamma\oplus H_1\cong \Gamma\times_G\Gamma$. 

Let $L^{\alpha}\subset M_{\alpha}L_i$ be  
a $\Gamma$-extension of $K^b$ different from $L_i$ and containing $L'$.
Then $L^{\alpha}$ provides a solution to the embedding problem 
\eqref{EP-quasi}. Again using the fact that $H$ is a minimal normal subgroup 
of $\Gamma$, we observe that $L_iK^b$ and $L_{\alpha}$ are linearly disjoint over
$L'K^b$. Therefore, $[L^{\alpha}L_i:K^b]=|G||H|^2=[M_{\alpha}L_i:K^b]$ and hence
$L^{\alpha}L_i=M_{\alpha}L_i$. 

For $\alpha,\beta \in I'$, we say $\alpha \sim \beta$, if 
$M_{\alpha}L_i=M_{\beta}L_i$. This is clearly an equivalence relation. 
Moreover each equivalence class is finite since $M_{\alpha}L_i$ can 
have only finitely many subfields which are $H_1$-extensions of $K^b$. 
So only finitely many $M_{\beta}$'s are contained in $M_{\alpha}L_i$.
Finally if $\alpha$ and $\beta$ are in two different equivalence classes
then $L^{\alpha}L_i$ and $L^{\beta}L_i$ are distinct, which implies 
$L^{\alpha}$ and $L^{\beta}$ are distinct. Since there are $m$ 
distinct equivalence classes, we obtain $m$ distinct solutions to the embedding
problem \eqref{EP-quasi}. 
\end{proof}

\begin{rmk}
Note that the hypothesis $\Pi\subset P_g(C)$ is not necessary in the above result.
\end{rmk}

\begin{proof}{\bf (of theorem \ref{groupisfree})}
The \'etale fundamental group $\pi_1(C)$ is projective so $\Pi$, 
being a closed subgroup of $\pi_1(C)$, is also projective
(\cite[Proposition 22.4.7]{FJ}). The result now follows from
Theorem \ref{profinite}, Theorem \ref{main} and Theorem \ref{quasi-p}.
\end{proof}

Let $C$ be a smooth affine curve as above. Recall that $\pi_1^c(C)$ is the 
commutator subgroup of $\pi_1(C)$.
\begin{pro}\label{rank}
Let $S$ be a finite simple group. Then 
$R_S(\pi_1^c(C))=m$ and $R_S(P_g(C))=m$ for all $g\ge 0$.
\end{pro}

\begin{proof}
Note that $\pi_1^c(C)$ and $P_g(C)$ for all $g\ge 0$ are closed normal subgroups 
of $\pi_1(C)$ of rank $m$ and $\pi_1^c(C)$ is
contained in $P_0(C)$. Moreover, $\pi_1(C)/\pi_1^c(C)$ and $\pi_1(C)/P_g(C)$ are 
pro-abelian groups. Let $S$ be a finite simple group.
If $S$ is a prime-to-$p$ group then the result follows from Theorem 
\ref{main} by taking 
$H=\Gamma=S$. If $p$ divides $|S|$ then $S$ is a quasi-$p$ simple group. Moreover 
if $S$ is also non-abelian group then the result follows from \cite[Theorem 5.3]{kum}. 
Finally if $S$ is an abelian simple quasi-$p$ group then $S\cong \ZZ/p\ZZ$. Since
$\pi_1(C)/\pi_1^c(C)$ and $\pi_1(C)/P_g(C)$ are abelian groups, the result follows 
from Proposition \ref{p-rankokay}.
\end{proof}

\begin{cor}\label{cor:freenessof-comm-P_g}
The commutator subgroup $\pi_1^c(C)$ of $\pi_1(C)$ is a profinite free group 
of rank $m$ for any smooth affine curve
$C$ over an algebraically closed field $k$ of characteristic $p$ and cardinality
$m$. The subgroups $P_g(C)$ of $\pi_1(C)$ are also free profinite group of rank 
$m$ for all $g\ge 0$.
\end{cor}

\begin{proof}
As observed earlier, $\pi_1^c(C)$ is a closed normal subgroup of $\pi_1(C)$ of 
rank $m$ contained in $P_0(C)$. Also $\pi_1(C)/\pi_1^c(C)$ and $\pi_1(C)/P_g(C)$ are 
abelian groups. So the result follows from Theorem \ref{groupisfree} and Proposition \ref{rank}.
\end{proof} 

The restriction that $\Pi\subset P_g(C)$ for some $g$ can not be dropped 
completely as the following example suggests. 
Though it could be somewhat relaxed (see Remark \ref{highgenus}).
\begin{example} \label{nonfree-ex}
 Let $C$ be the affine line and $\Pi=\cap \{\pi_1(Z)| Z \to  C$ an \'etale cover 
 and $Z$ is again the affine line\}. Clearly $\Pi$
 is a closed normal subgroup of $\pi_1(C)$ and $\pi_1(C)/\Pi$ is an infinite  
 abelian pro-$p$ subgroup. But $\Pi$ has no non-trivial prime-to-$p$ quotients.
 To see this, assume there is one. Then there exists a prime-to-$p$ finite field 
 extension $M/K^b$ with $M\subset K^{un}$. Using finiteness of this 
 field extension, one could get a prime-to-$p$ extension of $L$ where $L\subset K^b$
 is a finite extension of $k(C)$. But the normalization of $C$ in $L$ is also
 an affine line, so it can not have a prime-to-$p$ \'etale cover by Theorem
 \ref{abhconj}.
\end{example}

Let $K_{p^n}$ denote the intersection of all open normal subgroups of 
$\pi_1(C)$ so that the quotient is an abelian group of exponent at most $p^n$.
Let $G_{p^n}=\pi_1(C)/K_{p^n}$ then $G_{p^n}=\invlim \Gal (k(Z)/k(C))$ where
$Z\to C$ is a Galois \'etale cover of $C$ with Galois group $(\ZZ/p^n\ZZ)^l$
for some $l\ge 1$. The group $G_{p^n}$ has a description in terms of Witt rings of the 
coordinate ring of $C$. In fact 
$G_{p^n}\cong \Hom(W_n(\cO_C)/P(W_n(\cO_C)), \ZZ/p^n\ZZ)$ by \cite[Lemma 3.3]{kum}. 
Here $W_n(\cO_C)$ is the ring of Witt vectors of length $n$ and $P$ is a group
homomorphism from $W_n(\cO_C)$ to itself
given by ``Frobenius - Identity" (see Section 2 of \cite{kum} for details).
Hence for any $n\ge 1$, we get the following exact sequence:
$$1\to K_{p^n}\to \pi_1(C)\to \Hom(W_n(\cO_C)/P(W_n(\cO_C)), \ZZ/p^n\ZZ)\to 1$$

\begin{cor}
 $K_{p^n}$ is a profinite free group of rank $m$
\end{cor}

\begin{proof}
Note that $K_{p^n}$ is a closed normal subgroup of $\pi_1(C)$ and it is contained in 
$P_0(C)$. The quotient $\pi_1(C)/K_{p^n}$ is clearly an abelian group. 
Proposition \ref{rank} is also true when $\pi_1^c(C)$ is replaced by 
$K_{p^n}$ and the proof is the same. Hence $K_{p^n}$ is also profinite 
free of rank $m$ in view of Theorem \ref{groupisfree}.
\end{proof}

\end{document}